\documentclass[12pt]{amsart}

\usepackage{customstyle} 

\title{Moduli Theory of the $r$-Braid Arrangement} 
\author{Vance Blankers} 
\address[V. Blankers]{Department of Mathematics\\
Northeastern University\\
Boston, MA 02115\\
USA}
\email{v.blankers@northeastern.edu}

\author{Emily Clader} 
\address[E. Clader]{Department of Mathematics\\
San Francisco State University\\
San Francisco, CA 94132\\
USA}
\email{eclader@sfsu.edu}

\author{Iva Halacheva} 
\address[I. Halacheva]{Department of Mathematics\\
Northeastern University\\
Boston, MA 02115\\
USA}
\email{i.halacheva@northeastern.edu}

\author{Haggai Liu}
\address[H. Liu]{Department of Mathematics\\
Simon Fraser University\\
Burnaby, BC V5A 1S6 \\
Canada}
\email{haggail@sfu.ca}

\author{Dustin Ross}
\address[D. Ross]{Department of Mathematics\\
San Francisco State University\\
San Francisco, CA 94132\\
USA}
\email{rossd@sfsu.edu}

\begin{document}

\allowdisplaybreaks 

\begin{abstract}
We describe a family of hyperplane arrangements depending on a positive integer parameter $r$, which we refer to as the $r$-braid arrangements, and which can be viewed as a generalization of the classical braid arrangement. The wonderful compactification of the braid arrangement (with respect to its minimal building set) is well-known to yield the moduli space $\overline{\mathcal{M}}_{0,n}$, and, in this work, we generalize this result, constructing a moduli space $\overline{\mathcal{M}}^r_{n}$ of certain genus-zero curves with an order-$r$ involution that we identify with the corresponding wonderful compactification of the $r$-braid arrangement. The resulting space is a variant of the previously studied moduli space $\overline{\mathcal{L}}^r_n$ \cite{cdlr_permutohedral}, related via a change of weights on the markings.
\end{abstract}

\maketitle

\section{Introduction}\label{sec:intro}

For an arrangement $\mcA$ of hyperplanes in $\P^N$, a wonderful compactification is, roughly speaking, a way of replacing $\P^N$ by a different ambient variety in such a way that the complement of the hyperplanes is preserved but the arrangement $\mcA$ itself is replaced by a divisor with normal crossings. There are a number of such compactifications, given by blowing up $\P^N$ (in a prescribed order) along certain sets of subvarieties obtained via intersecting subsets of $\mcA$, known as building sets. The collection of building sets is partially ordered by inclusion, and it has a unique maximal and minimal element.

An important special case is when $\mcA$ is the braid arrangement, which consists of the hyperplanes
\[H_{ij} = \{[x_1: \cdots: x_n] \; | \; x_i = x_j\} \subseteq \P^{n-1}\]
for all $0 \leq i < j \leq n$, under the convention that $x_0 = 0$. In this case, the wonderful compactification with respect to the minimal building set coincides with the moduli space $\Mbar_{0,n+2}$ of genus-zero stable curves \cite[Section 4.3]{DCP}.

For any positive integer $r$, there is a natural generalization of the braid arrangement, which we refer to as the $r$-braid arrangement, consisting of the hyperplanes
\[H_{ij}^k = \{[x_1: \cdots: x_n] \; | \; x_i = \zeta^k x_j\} \subseteq \P^{n-1}\]
for all $1 \leq i \leq j \leq n$ and $0 \leq k \leq r-1$, where $\zeta$ denotes a fixed primitive $r$th root of unity. The primary goal of this paper is to give a modular interpretation of the wonderful compactification of the $r$-braid arrangement with respect to its minimal building set.

Specifically, we construct a moduli space $\Mbar^r_n$ parametrizing genus-zero stable curves with an order-$r$ automorphism $\sigma$, equipped with two marked fixed points of $\sigma$ and $n$ marked orbits of $\sigma$.  Our main theorem is the following.

\begin{theorem}
\label{thm:main}
Let $r \geq 2$ and $n \geq 1$. Then the wonderful compactification of the $r$-braid arrangement in $\P^{n-1}$, with respect to its minimal building set, is the moduli space $\Mbar^r_n$.
\end{theorem}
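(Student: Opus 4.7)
The plan is to emulate the classical argument that the wonderful compactification of the braid arrangement with respect to its minimal building set recovers $\Mbar_{0,n+2}$, adapting each step to incorporate the order-$r$ automorphism $\sigma$. I would proceed in three stages: identify the open locus, describe and match the boundary strata, and then verify the resulting iterated blow-up structure.

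First, on the open locus, a point $[x_1 : \cdots : x_n] \in \P^{n-1}$ lies in the complement of the $r$-braid arrangement precisely when all coordinates are nonzero and no two $x_i, x_j$ differ by a power of $\zeta$. Such data naturally determines a smooth $\sigma$-equivariant pointed curve, namely $\P^1$ with automorphism $\sigma(z) = \zeta z$, fixed-point markings at $0$ and $\infty$, and the $n$ orbits $\{\zeta^k x_i\}_{k=0}^{r-1}$ as marked $\sigma$-orbits; the overall rescaling action on $z$ accounts for the projectivization of the coordinates. This correspondence should extend, via the universal property of $\Mbar^r_n$, to a canonical isomorphism between the open complement of the $r$-braid arrangement and the smooth locus $\mathcal{M}^r_n \subseteq \Mbar^r_n$.

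Second, I would analyze the minimal building set $\mathcal{G}_{\min}$ of the $r$-braid arrangement. By the De Concini--Procesi characterization, its elements are the irreducible flats, which I expect to index by pairs $(S, \phi)$, where $S \subseteq \{1, \ldots, n\}$ has size at least two and $\phi$ records a consistent choice of cyclic phases $k_{ij} \in \mathbb{Z}/r\mathbb{Z}$ governing the relations $x_i = \zeta^{k_{ij}} x_j$ for $i, j \in S$, together with singleton subsets corresponding to the coordinate hyperplanes $x_i = 0$. On the moduli side, each boundary divisor of $\Mbar^r_n$ records a degeneration in which some collection of orbits collides on a $\sigma$-invariant bubble, and since that bubble inherits a compatible $\sigma$-action, the same phase data emerges naturally. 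Establishing this bijection between elements of $\mathcal{G}_{\min}$ and irreducible boundary divisors of $\Mbar^r_n$ is the combinatorial core of the argument.

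Third, with this correspondence in place, I would construct an explicit morphism $\Mbar^r_n \to \P^{n-1}$ by contracting all $\sigma$-invariant bubbles onto the main component and recording the marked orbits in the resulting coordinates, then show that this morphism factors through the iterated blow-up along subvarieties in $\mathcal{G}_{\min}$ taken in order of increasing dimension. An efficient route is to exploit the related space $\overline{\mathcal{L}}^r_n$ of \cite{cdlr_permutohedral}, which is realized as the wonderful compactification for a larger building set of the same arrangement: $\Mbar^r_n$ should then arise as the natural partial contraction of $\overline{\mathcal{L}}^r_n$ dual to passing from the larger to the minimal building set, and the change of marking weights highlighted in the abstract should be precisely what governs which boundary divisors are contracted.

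The main obstacle, I anticipate, is the combinatorial description of the minimal building set and its clean matching with the $\sigma$-equivariant strata of $\Mbar^r_n$. Unlike the classical case, where the combinatorics is governed purely by subsets of markings, here cyclic phase information must be tracked throughout, and aligning the dual-graph formalism for $\Mbar^r_n$ with the matroidal notion of an irreducible intersection of hyperplanes will likely be the technical heart of the proof.
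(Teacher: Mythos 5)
Your proposal diverges substantially from the paper's argument and has two gaps worth flagging, one combinatorial and one structural.

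The combinatorial gap: your proposed description of the minimal building set is incomplete. You index irreducible flats by pairs $(S,\phi)$ with $|S|\geq 2$ and a coherent choice of phases $k_{ij}$, plus the coordinate hyperplanes $x_i=0$. This misses the higher-codimension coordinate subspaces $\{x_{i_1}=\cdots=x_{i_\ell}=0\}$ with $\ell\geq 2$, which \emph{are} indecomposable in the intersection lattice (in the paper's graph-theoretic encoding, these correspond to a ``complete $r$-flower'' component, where every phase holds simultaneously and so no single $\phi$ is well-defined). The correct answer, which the paper establishes via an isomorphism of $\mcL_{\mcA^r_n}$ with a poset of decorated $(r,n)$-graphs, is that $\mcG_{\min}$ consists of all subspaces spanned by a subset of coordinate points $\{p_1,\ldots,p_n\}$ together with at most one root-of-unity point $p_{\mathbf a}$; the coordinate subspaces without any $p_{\mathbf a}$ are an essential part of the list and correspond to the toric blow-ups producing $\Lbar_n$.

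The structural gap: your step three is where the proof would actually have to happen, and it is underspecified. Matching boundary divisors of $\Mbar^r_n$ with elements of $\mcG_{\min}$ does not by itself produce an isomorphism between $\Mbar^r_n$ and the iterated blow-up; one must actually build the map and show it is an isomorphism, and your sketch does not say how. You also suggest deducing the result by partially contracting $\Lbar^r_n$, but the paper's relationship between $\Mbar^r_n$ and $\Lbar^r_n$ is via a change of weights rather than a change of building set for a fixed arrangement, so it is not clear that route closes. The paper instead proceeds by a different and quite concrete mechanism you don't mention: it observes that the $r$-braid arrangement is the preimage of the braid arrangement under the coordinatewise $r$th-power map, lifts that map to a finite locally free toric endomorphism $q:\Lbar_n\to\Lbar_n$, and identifies the wonderful compactification with the fiber product $\Mbar_{0,n+2}\times_{\Lbar_n}\Lbar_n$ by exploiting that blow-ups commute with flat base change (plus a lemma comparing $\overline{Z}\times_Y X$ with $\overline{q^{-1}(Z)}$ for finite locally free $q$). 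Pausing at $\Lbar_n$ rather than $\P^{n-1}$ is essential, since the coordinate subspaces lie in the ramification locus of the $r$th-power map and taking the fiber product directly over $\P^{n-1}$ would introduce nonreduced centers. Finally, the modular interpretation is established by a separate universal-property argument constructing the tautological family from the fiber product diagram, rather than by a stratum-by-stratum matching. If you want to salvage your approach, I would recommend first correcting the building-set description and then confronting head-on how to promote a divisor correspondence to an actual isomorphism; the fiber-product-plus-flat-base-change idea is what makes that step tractable.
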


\begin{remark}
This space is closely related to the moduli space $\Lbar^r_n$ constructed and studied in \cite{cdlr_permutohedral, cdlr_wonderful, cdehl_multimatroids}, which has interesting connections to generalizations of the permutohedron and matroids. More precisely, the marked fixed points and all but one of the marked orbits in $\Lbar^r_n$ are ``light'' in the sense that they are allowed to coincide with one another, whereas the marked points in $\Mbar^r_n$ are required to be all distinct. In this way, $\Mbar^r_n$ can be viewed as relating to $\Lbar^r_n$ in much the same way that $\Mbar_{0,n}$ relates to the Losev--Manin moduli space $\Lbar_n$, which parametrizes genus-zero curves with two ``heavy'' (non-coinciding) marked points and $n$ light marked points.
\end{remark}

The strategy for the proof of Theorem~\ref{thm:main} is as follows. First, we study the combinatorics of the intersection lattice of the $r$-braid arrangement in $\P^{n-1}$, relating it to a certain poset of decorated graphs. This allows us to describe the minimal building set explicitly as consisting of all subspaces spanned by a subset of the coordinate points
\[[1:0: \cdots: 0], \; [0:1:0: \cdots: 0], \; \ldots,\; [0: \cdots : 0:1]\]
and at most one of the $r^{n-1}$ points whose coordinates are all $r$th roots of unity. In particular, the desired wonderful compactification $\Mbar^r_n$ is the blow-up of $\P^{n-1}$ along all such subspaces, in any inclusion-increasing order.

From here, we leverage the known perspective on both the moduli space of curves and the Losev--Manin moduli space $\Lbar_n$ as similar iterated blow-ups.  A key fact about Losev--Manin space is that, by virtue of its stability condition, the genus-zero curves being parametrized are chains of projective lines, as opposed to the arbitrary trees of projective lines that are parametrized by $\Mbar_{0,n+2}$. This allows us to define an ``$r$th power'' map $q: \Lbar_n \rightarrow \Lbar_n$ given by $[x:y] \mapsto [x^r: y^r]$ on each $\P^1$ in the chain. Using $q$, together with the above explicit perspective on the wonderful compactification of the $r$-braid arrangement as an iterated blow-up, we identify the wonderful compactification with a fiber product
\[\xymatrix{
F \ar[r]\ar[d] & \Lbar_n\ar[d]^{q}\\
\Mbar_{0,n+2}\ar[r]^-{c} & \Lbar_n,}\]
in which $c$ is a weight-reduction morphism. This fiber diagram allows us to construct a universal family over $F$ of the objects parametrized by $\Mbar^r_n$ by combining the data of the universal families over $\Mbar_{0,n+2}$ and $\Lbar_n$. From here, the fact that $F$ is the requisite moduli space follows.

\subsection*{Acknowledgments}  The authors wish to thank the organizers of the 2024 workshop ``Combinatorics of Moduli of Curves,'' and the Banff International Research Station for hosting this workshop, which was where work on this project began.  E.C. was supported by NSF CAREER grant 2137060.  I.H. was supported by NSF DMS--2302664. D.R. was supported by NSF DMS--2302024 and DMS--2001439.

\section{Wonderful compactifications}\label{sec:wc}

Wonderful compactifications were introduced by De Concini and Procesi \cite{DCP} in the context of hyperplane arrangements in projective space and later generalized by Li \cite{LiLi} to more general arrangements of subvarieties in a smooth ambient variety. We review the necessary background in this section, referring the reader to such references as \cite{DCP, FY, LiLi} for further details.

\subsection{Basic combinatorics}

A {\bf hyperplane arrangement} in $\P^N$ is a finite collection
\[
\mcA = \{H_1, \ldots, H_k\}
\]
of hyperplanes $H_i \subseteq \P^N$. The {\bf intersection lattice} of $\mcA$ is the collection $\mcL_{\mcA}$ of all subsets
\[
H_I := \bigcap_{i \in I} H_i \subseteq \P^N
\]
for $I \subseteq [k] := \{1, \ldots, k\}$. We consider $\mcL_{\mcA}$ as a poset under \emph{reverse} inclusion, so that the unique minimal element is $\hat{0} = H_{\emptyset} = \P^N$.

By the {\bf complement} of an arrangement $\mcA$, we mean
\[
Y^{\circ} := \P^N \setminus \bigcup_{i=1}^k H_i.
\]
One compactification of $Y^{\circ}$ is, of course, $\P^N$ itself, and $\P^N \setminus Y^{\circ}$ is (the union of) the original arrangement. The hyperplanes in $\mcA$ may not form a normal crossings divisor, however, and a wonderful compactification is, roughly speaking, a compactification $\overline{Y}$ of $Y^{\circ}$ in which $\mcA$ is replaced by a divisor $\overline{Y} \setminus Y^{\circ}$ with normal crossings. The way to produce such a compactification is, unsurprisingly, by blowing up $\P^N$ at subspaces along which unwanted intersections of $\mcA$ occur.

Since some subsets of $\mcA$ may already intersect in normal crossings, one generally need only blow up at a subset of $\mcL_{\mcA}$ in order to obtain a compactification with the requisite ``wonderful'' properties (some of which we describe below). In particular, the subsets of $\mcL_{\mcA}$ that give rise to wonderful compactifications are as follows.

\begin{definition}
\label{def:buildingset}
Let $\mcA$ be a hyperplane arrangement in $\P^N$. A {\bf building set} of $\mcA$ is a subset $\mcG \subseteq \mcL_{\mcA} \setminus \{\hat{0}\}$ such that, for all $X \in \mcL_{\mcA} \setminus \{\hat{0}\}$, there is an isomorphism of posets
  \[[\hat{0}, X] \cong \prod_{\substack{\text{maximal }Y \in \mcG\\ \text{with }Y \leq X}} [\hat{0},Y].\]
Here, $\leq$ denotes the reverse-inclusion order in $\mcL_{\mcA}$, and $[A,B]$ denotes the interval
  \[[A,B] := \{C \in \mcL_{\mcA} \; | \; A \leq C \leq B\}.\]
\end{definition}

The condition of Definition~\ref{def:buildingset} is automatically satisfied when $X \in \mcG$, so $\mcG = \mcL_{\mcA} \setminus \{\hat{0}\}$ is a building set (the {\bf maximal building set}); this corresponds to the observation that one can obtain a wonderful compactification by blowing up $\P^N$ at {\it every} intersection of the hyperplanes in $\mcA$. On the other hand, if the arrangement includes distinct hyperplanes $H_1$ and $H_2$ for which $X = H_1 \cap H_2$ and no other hyperplane in the arrangement meets $X$, then it is straightforward to verify that
\[[\hat{0}, X] \cong [\hat{0}, H_1] \times [\hat{0}, H_2].\]
Therefore, a building set can include $H_1$ and $H_2$ without including $X = H_1 \cap H_2$; this corresponds to the observation that $H_1$ and $H_2$ already have normal crossings, so no blow-up along their intersection is needed. In Figure~\ref{fig:three-hyp-int-at-line}, we contrast this situation with the situation where there is a third hyperplane $H_3$ for which $H_1 \cap H_2 = H_1 \cap H_3 = H_2 \cap H_3$, in which case a building set must contain this (non-transverse) intersection $X$ since $[\hat{0}, X]$ does not decompose as a product. 

\begin{figure}
  \centering
  \begin{minipage}{0.4\linewidth}
  \includegraphics[width=0.8\linewidth]{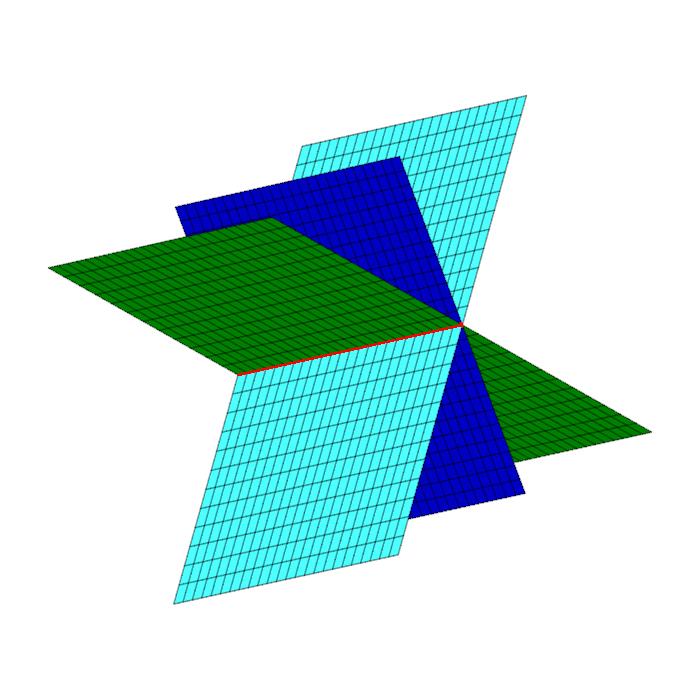}
  \end{minipage}
  \begin{minipage}{0.4\linewidth}
  \xymatrix{
    & X\ar@{-}[dl]\ar@{-}[d]\ar@{-}[dr] & \\
    H_1\ar@{-}[dr] & H_2\ar@{-}[d] & H_3\ar@{-}[dl] \\
    & \hat{0} & 
    }
  \end{minipage}
  \caption{Hyperplanes $H_1, H_2, H_3$ in three-space that intersect at a common line $X = H_1 \cap H_2 \cap H_3$, and the corresponding poset $[\hat{0}, X]$, which has rank two but does not decompose as a product of two rank-one posets.}
  \label{fig:three-hyp-int-at-line}
\end{figure}

Once a building set is chosen, the precise definition of a wonderful compactification is as follows.

\begin{definition}
Let $\mcA$ be a hyperplane arrangement in $\P^N$, and let $\mcG \subseteq \mcL_{\mcA} \setminus \{\hat{0}\}$ be a building set. Choose an ordering of the elements of $\mcG$ that is compatible with inclusion; that is, let
\[\mcG = \{G_1, \ldots, G_\ell\}\]
in which $i \leq j$ whenever $G_i \subseteq G_j$. Then the {\bf wonderful compactification} of $\mcA$ with respect to $\mcG$ is the variety $\overline{Y}_{\mcG}$ obtained by blowing up $\P^N$ along $G_1$, then blowing up the result along the strict transform of $G_2$, then blowing up the result along the strict transform of $G_3$, and so on.
\end{definition}

Among the ``wonderful'' properties of $\overline{Y}_{\mcG}$ are the following \cite[Section 3]{DCP}:
\begin{itemize}
\item Because $\overline{Y}_{\mcG}$ is obtained from $\P^N$ by blowing up only at subvarieties that do not meet $Y^{\circ}$, there is an inclusion
\[Y^{\circ} \hookrightarrow \overline{Y}_{\mcG}\]
as a dense open set; in other words, $\overline{Y}_{\mcG}$ is a compactification of $Y^{\circ}$.
\item The boundary $\overline{Y}_{\mcG} \setminus Y^{\circ}$ is a divisor with normal crossings.
\item The intersection-theoretic structure of the boundary can be read off combinatorially: namely, $\overline{Y}_{\mcG} \setminus Y^{\circ}$ is a union of divisors $D_G$ for nonempty $G \in \mcG$, and $D_{G_1} \cap \cdots \cap D_{G_k}$ is nonempty if and only if $\{G_1, \ldots, G_k\}$ forms a ``$\mcG$-nested set.''
\end{itemize}
The notion of ``$\mcG$-nested set'' is entirely combinatorial (see \cite[Definition 2.4]{DCP} or \cite[Definition 3.2]{Feichtner}); for example, when $\mcG$ is the maximal building set, $\mcG$-nested sets are simply chains in the poset $\mcL_{\mcA} \setminus \{\hat{0}\}$.

\subsection{The minimal building set}

In this paper, we will be primarily concerned with the minimal building set. To describe this more explicitly, refer to $X \in \mcL_{\mcA} \setminus \{\hat{0}\}$ as {\bf decomposable} if there is an isomorphism of posets
\begin{align}\label{eqn:decomp-join}
\nonumber    [\hat{0}, Y_1] \times [\hat{0}, Y_2] &\overset{\cong}{\rightarrow} [\hat{0}, X]\\
    \ 
    (H_1, H_2) &\mapsto H_1\lor H_2 = H_1\cap H_2
\end{align}
for some $Y_1, Y_2 < X$, and otherwise, refer to $X$ as {\bf indecomposable}. 

Note that, for any $X \in \mcL_{\mcA} \setminus \{\hat{0}\}$, one has
\[
[\hat{0}, X] \cong \prod_{\substack{\text{maximal}\\ \text{indecomposable }Y \leq X}} [\hat{0}, Y].
\]
This fact follows from two observations:
\begin{enumerate}
\item Any interval $[\hat{0}, X]$ can successively be decomposed into products of subintervals until the upper bounds on those subintervals are indecomposable; and
\item Any element of a product of intervals that is not contained in one of the intervals is decomposable, as it can be decomposed by its factors in the product.
\end{enumerate}
Thus, it follows that the set of indecomposable elements forms a building set. Conversely, it is straightforward from Definition~\ref{def:buildingset} to see that any building set must contain all indecomposable elements, so the {\bf minimal building set} is the set
\[\mcG_{\text{min}} = \Big\{\text{indecomposable elements of } \mcL_{\mcA} \setminus \{\hat{0}\}\Big\}.\]

As an illustrative example, let us determine the minimal building set of the braid arrangement.  These results are well-known to experts, but we summarize them here as they provide a helpful template for the setting of the $r$-braid arrangement to come.

\begin{example}
\label{ex:braid}
Recall from the introduction that the braid arrangement in $\P^{n-1}$ is the arrangement $\mcA_n$ consisting of all hyperplanes
\[H_{ij} = \{[x_1: \cdots: x_n] \; | \; x_i = x_j\} \subseteq \P^{n-1}\]
for $0 \leq i < j \leq n$, where we set $x_0 = 0$. Elements of the intersection lattice $\mcL_{\mcA_n}$ can be naturally identified with set partitions of $\{0,1,\ldots, n\}$, in which the set partition
\[\{0,1, \ldots, n\} = S_1 \sqcup \cdots \sqcup S_\ell\]
corresponds to the linear subspace
\begin{equation}
  \label{eq:Gminbraid}
  \Big\{[x_1: \cdots: x_n] \; \big| \; x_i = x_j \text{ if } \{i,j\} \subseteq S_k \text{ for some }k\Big\}.
\end{equation}
Let us denote a set partition by $\mathbf{S} = \{S_1, \ldots, S_m\}$, where we omit any parts of size $1$ from the notation (since they do not contribute to the condition defining \eqref{eq:Gminbraid}), and let us denote the corresponding subspace by $X_{\mathbf{S}}$. If $m \geq 2$, then 
\[[\hat{0}, X_{\mathbf{S}}] \cong [\hat{0}, X_{\mathbf{S}_1}] \times [\hat{0}, X_{\mathbf{S}_2}],\]
in which $\mathbf{S}_1$ and $\mathbf{S}_2$ are given by removing $S_1$ and $S_2$, respectively, from $\mathbf{S}$.

Thus, the indecomposable elements of $\mcL_{\mcA_n}$ correspond to set partitions with only one non-singleton part $S$. As subsets of $\P^{n-1}$, these are of the form
\[
X_S:= X_{\{S\}} = \Big\{[x_1: \cdots: x_n] \; \big| \; x_i = x_j \text{ if } \{i,j\} \subseteq S\Big\}
\]
for any nonempty subset $S \subseteq \{0,1,\ldots, n\}$. Setting $p_0 = [1:1: \cdots: 1]$ and, for $1 \leq i \leq n$,
\[p_i = [0: \cdots: 0: 1: 0: \cdots: 0]\]
(with a $1$ in the $i$th spot), then $X_S$ can equivalently be expressed as the linear subspace spanned by $p_i$ with $i \notin S$.

One finds, then, that the wonderful compactification of $\mcA_n$ with respect to its minimal building set is the iterated blow-up of $\P^{n-1}$ along all subspaces spanned by subsets of $\{p_0, p_1, \ldots, p_n\}$, in inclusion-increasing order. This precisely coincides with Kapranov's description \cite{Kapranov} of $\Mbar_{0,n+2}$.
\end{example}

In the next section, we adapt the argument in the above example to give an explicit description of the minimal building set of the $r$-braid arrangement.

\section{The $r$-braid arrangement and its minimal building set}
\label{sec:minbuildingset}

Fix a primitive $r$th root of unity $\zeta$. The {\bf $r$-braid arrangement} in $\P^{n-1}$ is the arrangement $\mcA^r_n$ consisting of all hyperplanes of the form
\[H^k_{ij} := \{[x_1: \cdots : x_n] \; | \; x_i = \zeta^k x_j\} \subseteq \P^{n-1}\]
for $1 \leq i \leq j \leq n$ and $0 \leq k \leq r-1$. Note, in particular, that we allow $i=j$, but in this case we must insist that $k \neq 0$ (as $H^0_{ii}$ is all of $\P^{n-1}$, not a hyperplane), and for such $k$,
\[H^k_{ii} =\{ [x_1: \cdots: x_n] \; | \; x_i = 0\}.\]

The goal of this section is to give an explicit description of the minimal building set of the $r$-braid arrangement along the lines of Example~\ref{ex:braid}. Here, as above, we denote by $p_1, \ldots, p_n$ the coordinate points in $\P^{n-1}$. Furthermore, we denote $\Z_r = \{0,1, \ldots, r-1\}$, and for any $\mathbf{a} = (a_1, \ldots, a_n) \in \Z_r^n$, we denote
\[p_{\mathbf{a}} = [\zeta^{a_1}: \zeta^{a_2}: \cdots: \zeta^{a_n}].\]
The description that we aim to prove is the following.

\begin{proposition}
\label{prop:minbuildingset}
The minimal building set of the $r$-braid arrangement in $\P^{n-1}$ consists of all subspaces spanned either by a subset $S$ of the coordinate points $p_1, \ldots, p_n$, or by $S \cup \{p_{\mathbf{a}}\}$ for some $\mathbf{a} \in \Z_r^n$.
\end{proposition}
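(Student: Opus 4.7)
The plan is to mimic the strategy of Example~\ref{ex:braid}, indexing elements of $\mcL_{\mcA^r_n}$ by a combinatorial gadget and then characterizing the indecomposable ones. The appropriate gadget here is a \emph{decorated partition} of $[n] = \{1, \ldots, n\}$, by which I mean data $\mathbf{S} = (S_0;\, (S_1, \mathbf{a}_1), \ldots, (S_m, \mathbf{a}_m))$ consisting of a possibly-empty ``zero part'' $S_0 \subseteq [n]$ together with pairwise disjoint non-singleton subsets $S_j \subseteq [n] \setminus S_0$, each carrying a decoration $\mathbf{a}_j \in \Z_r^{S_j}/\Z_r$ (the quotient being by diagonal translation). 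The first step is to establish a bijection between decorated partitions and $\mcL_{\mcA^r_n}$ via
\[
X_{\mathbf{S}} = \Big\{[x_1:\cdots:x_n] \;\Big|\; x_i = 0 \text{ for } i \in S_0, \;\text{ and }\; x_i/x_j = \zeta^{a_i - a_j} \text{ for } i, j \in S_\ell,\; \ell \geq 1\Big\},
\]
noting that $X_{\mathbf S}$ is evidently cut out by hyperplanes in $\mcA^r_n$, while the inverse recovers $S_0$ as the set of identically-vanishing coordinates on $X$ and recovers the non-singleton parts as the equivalence classes of non-vanishing coordinates under the relation ``$\exists\, k \in \Z_r : x_i = \zeta^k x_j$ on $X$.''

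Next, I would show that $[\hat{0}, X_{\mathbf S}]$ factors as a product indexed by the components of $\mathbf S$:
\[
[\hat{0}, X_{\mathbf S}] \;\cong\; [\hat{0}, X_{(S_0;\, )}] \;\times\; \prod_{j=1}^{m} [\hat{0}, X_{(\emptyset;\, (S_j, \mathbf a_j))}].
\]
The key observation is that any $Y \in [\hat{0}, X_{\mathbf S}]$ corresponds to a decorated partition $\mathbf T$ whose zero part sits inside $S_0$ and each of whose decorated parts sits inside a single $S_j$ with compatible decoration; hence the data of $\mathbf T$ splits independently across the components of $\mathbf S$, and joins under intersection match the product poset structure. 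In particular, $X_{\mathbf S}$ is decomposable whenever $\mathbf S$ has more than one component, i.e.\ whenever $S_0 \neq \emptyset$ and $m \geq 1$, or $m \geq 2$.

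For the remaining single-component cases, I would prove indecomposability separately in two subcases. When $S_0 = \emptyset$ and $m = 1$, the interval $[\hat{0}, X_{(\emptyset;\, (S_1, \mathbf a_1))}]$ is isomorphic to the classical set-partition lattice $\Pi_{|S_1|}$ (the decoration restricts rigidly on each sub-part), whose indecomposability is well-known. When $m = 0$ and $|S_0| = k \geq 1$, the interval is isomorphic to the full intersection lattice $\mcL_{\mcA^r_k}$, and I would deduce its indecomposability from matroid connectedness: for any $i \neq j$ and $\ell \in \Z_r$, the triple $\{x_i = 0\}$, $\{x_j = 0\}$, $\{x_i = \zeta^\ell x_j\}$ is pairwise independent but jointly dependent, and these circuits link every hyperplane of $\mcA^r_k$ into a single component, so the underlying matroid is connected and the lattice admits no nontrivial product decomposition. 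I expect this indecomposability step to be the main technical obstacle. Once both subcases are settled, translation back to geometry is immediate: when $m = 0$ and $S_0 \neq \emptyset$, the subspace $X_{\mathbf S}$ is the linear span of $\{p_i : i \notin S_0\}$, and when $m = 1$ and $S_0 = \emptyset$, any extension of $\mathbf a_1$ to an element $\mathbf a \in \Z_r^n$ exhibits $X_{\mathbf S}$ as the span of $\{p_i : i \notin S_1\} \cup \{p_{\mathbf a}\}$, matching the statement of the proposition.
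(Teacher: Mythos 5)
Your proposal is correct and follows the same high-level strategy as the paper, but handles the key indecomposability step via a different route. Your ``decorated partitions'' are combinatorially equivalent to the paper's $(r,n)$-graphs (the zero part $S_0$ corresponds to the complete $r$-flower $G_0$, the decorated parts $(S_j,\mathbf{a}_j)$ to the complete graphs $G_j$ with compatible labels), and your product factorization of $[\hat{0}, X_{\mathbf{S}}]$ across components matches the decomposability direction of Lemma~\ref{lem:indecomposablegraphs}. Where you diverge is in the indecomposability direction: the paper argues by contrapositive directly from the structure of a hypothetical isomorphism $[\hat{0},G]\cong[\hat{0},K_1]\times[\hat{0},K_2]$, showing a crossing edge between the nontrivial components of $K_1$ and $K_2$ could not arise as a join; you instead split into the two single-component shapes and invoke standard facts --- indecomposability of the partition lattice $\Pi_{|S_1|}$ when $S_0=\emptyset$ and $m=1$, and connectedness of the matroid underlying $\mcA_k^r$ via circuits $\{x_i=0\},\{x_j=0\},\{x_i=\zeta^\ell x_j\}$ when $m=0$. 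Both routes are sound; the paper's is self-contained while yours is more modular, outsourcing to standard lattice/matroid theory. To make the matroid argument fully precise you should record the fact that a geometric lattice factors as a nontrivial product if and only if its matroid is disconnected, and verify carefully that your circuits chain every pair of hyperplanes of $\mcA_k^r$ into a single component (they link $\{x_i=0\}$, $\{x_j=0\}$, and every $\{x_i=\zeta^\ell x_j\}$; further chaining through the coordinate hyperplanes then connects hyperplanes with disjoint index pairs).
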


To prove this, we first identify the intersection lattice $\mcL_{\mcA^r_n}$ with a purely combinatorial lattice, in much the same way that the intersection lattice of the braid arrangement is identified with the lattice of set partitions. The combinatorial objects in this case are somewhat richer; we refer to them as {\bf $(r,n)$-graphs} and introduce them below.

\subsection{The lattice of $(r,n)$-graphs}

Fix $r \geq 2$ and $n \geq 1$. In what follows, a {\bf complete graph} refers (as usual) to a simple graph with an edge between every pair of distinct vertices, and a {\bf complete $r$-flower} refers to a non-simple graph with $r$ edges between every pair of distinct vertices and $r-1$ self-edges at each vertex.

\begin{definition}
\label{def:rngraph}
An {\bf $(r,n)$-graph} is a graph $G$ with vertex set
\[V(G) = [n] = \{1, 2, \ldots, n\}\]
and a fixed edge labeling
\begin{align*}
   E(G) &\rightarrow \Z_r\\
   \{i,j\} &\mapsto k_{ij} = k_{ji}
\end{align*}
such that $G$ decomposes as a disjoint union of connected subgraphs
\[
G = G_0 \sqcup G_1 \sqcup \cdots \sqcup G_b
\]
for some $b \geq 0$, satisfying the following conditions:
\begin{enumerate}
\item $G_1, \ldots, G_b$ are nonempty complete graphs, and their edge labels are {\bf compatible} in the sense that for any triple of vertices $i_1 < i_2 < i_3$, we have
\[
k_{i_1i_3} \equiv (k_{i_1i_2} + k_{i_2i_3}) \mod r.
\]
\item $G_0$ is a (possibly empty) complete $r$-flower with the $r$ edges between distinct vertices labeled $0, 1, \ldots r-1$ and the $r-1$ self-edges at each vertex labeled $1, \ldots, r-1$.
\end{enumerate}

Let $\Gamma^r_n$ denote the poset of all $(r,n)$-graphs, where $G \leq H$ if $E(G) \subseteq E(H)$ and the edge labeling on $G$ agrees with that on $H$.  
\end{definition}

\begin{example}
\label{ex:rngraph}
An example of a $(3,7)$-graph is shown below, with vertices labeled in black and edges labeled in blue:

\tikzset{every picture/.style={line width=0.75pt}} 

\begin{center}
\begin{tikzpicture}[x=0.75pt,y=0.75pt,yscale=-1,xscale=1]

\draw  [color={rgb, 255:red, 0; green, 0; blue, 0 }  ,draw opacity=1 ][fill={rgb, 255:red, 0; green, 0; blue, 0 }  ,fill opacity=1 ] (90,119.78) .. controls (90,117.69) and (91.57,116) .. (93.5,116) .. controls (95.43,116) and (97,117.69) .. (97,119.78) .. controls (97,121.87) and (95.43,123.56) .. (93.5,123.56) .. controls (91.57,123.56) and (90,121.87) .. (90,119.78) -- cycle ;
\draw    (93.5,119.78) .. controls (69,123.8) and (34,104.8) .. (44,82.8) .. controls (54,60.8) and (93,83.8) .. (93.5,119.78) -- cycle ;
\draw    (93.5,123.56) .. controls (94,154.8) and (57,183.8) .. (43,160.8) .. controls (29,137.8) and (59,123.8) .. (93.5,119.78) ;

\draw  [color={rgb, 255:red, 0; green, 0; blue, 0 }  ,draw opacity=1 ][fill={rgb, 255:red, 0; green, 0; blue, 0 }  ,fill opacity=1 ] (151.48,119.78) .. controls (151.48,117.69) and (149.91,116) .. (147.98,116) .. controls (146.05,116) and (144.48,117.69) .. (144.48,119.78) .. controls (144.48,121.87) and (146.05,123.56) .. (147.98,123.56) .. controls (149.91,123.56) and (151.48,121.87) .. (151.48,119.78) -- cycle ;
\draw    (147.98,119.78) .. controls (172.48,123.8) and (207.48,104.8) .. (197.48,82.8) .. controls (187.48,60.8) and (148.48,83.8) .. (147.98,119.78) -- cycle ;
\draw    (147.98,123.56) .. controls (147.48,154.8) and (184.48,183.8) .. (198.48,160.8) .. controls (212.48,137.8) and (182.48,123.8) .. (147.98,119.78) ;

\draw    (93.5,119) -- (147.98,119) ;
\draw    (93.5,119) .. controls (106,86.8) and (132,85.8) .. (147.98,119) ;
\draw    (93.5,123.56) .. controls (109,153.8) and (134,156.8) .. (147.98,123.56) ;
\draw   (301,80.8) -- (343,152.8) -- (261,151.8) -- cycle ;
\draw  [fill={rgb, 255:red, 0; green, 0; blue, 0 }  ,fill opacity=1 ] (257.6,150.8) .. controls (257.6,148.93) and (259.12,147.4) .. (261,147.4) .. controls (262.88,147.4) and (264.4,148.93) .. (264.4,150.8) .. controls (264.4,152.68) and (262.88,154.21) .. (261,154.21) .. controls (259.12,154.21) and (257.6,152.68) .. (257.6,150.8) -- cycle ;
\draw  [fill={rgb, 255:red, 0; green, 0; blue, 0 }  ,fill opacity=1 ] (297.6,84.21) .. controls (297.6,82.33) and (299.12,80.8) .. (301,80.8) .. controls (302.88,80.8) and (304.4,82.33) .. (304.4,84.21) .. controls (304.4,86.09) and (302.88,87.61) .. (301,87.61) .. controls (299.12,87.61) and (297.6,86.09) .. (297.6,84.21) -- cycle ;
\draw  [fill={rgb, 255:red, 0; green, 0; blue, 0 }  ,fill opacity=1 ] (339.6,152.8) .. controls (339.6,150.93) and (341.12,149.4) .. (343,149.4) .. controls (344.88,149.4) and (346.4,150.93) .. (346.4,152.8) .. controls (346.4,154.68) and (344.88,156.21) .. (343,156.21) .. controls (341.12,156.21) and (339.6,154.68) .. (339.6,152.8) -- cycle ;

\draw  [fill={rgb, 255:red, 0; green, 0; blue, 0 }  ,fill opacity=1 ] (384.6,120.8) .. controls (384.6,118.93) and (386.12,117.4) .. (388,117.4) .. controls (389.88,117.4) and (391.4,118.93) .. (391.4,120.8) .. controls (391.4,122.68) and (389.88,124.21) .. (388,124.21) .. controls (386.12,124.21) and (384.6,122.68) .. (384.6,120.8) -- cycle ;
\draw  [fill={rgb, 255:red, 0; green, 0; blue, 0 }  ,fill opacity=1 ] (455.6,121.8) .. controls (455.6,119.93) and (457.12,118.4) .. (459,118.4) .. controls (460.88,118.4) and (462.4,119.93) .. (462.4,121.8) .. controls (462.4,123.68) and (460.88,125.21) .. (459,125.21) .. controls (457.12,125.21) and (455.6,123.68) .. (455.6,121.8) -- cycle ;
\draw    (388,120.8) -- (459,121.8) ;

\draw (80,121.4) node [anchor=north west][inner sep=0.75pt]    {$1$};
\draw (150.48,121.18) node [anchor=north west][inner sep=0.75pt]    {$3$};
\draw (244,148.4) node [anchor=north west][inner sep=0.75pt]    {$2$};
\draw (350,149.4) node [anchor=north west][inner sep=0.75pt]    {$4$};
\draw (296,59.4) node [anchor=north west][inner sep=0.75pt]    {$6$};
\draw (383,127.4) node [anchor=north west][inner sep=0.75pt]    {$5$};
\draw (453,129.4) node [anchor=north west][inner sep=0.75pt]    {$7$};
\draw (30,81.4) node [anchor=north west][inner sep=0.75pt]    {$\textcolor[rgb]{0.29,0.56,0.89}{1}$};
\draw (28,144.4) node [anchor=north west][inner sep=0.75pt]    {$\textcolor[rgb]{0.29,0.56,0.89}{2}$};
\draw (200.48,81.2) node [anchor=north west][inner sep=0.75pt]    {$\textcolor[rgb]{0.29,0.56,0.89}{1}$};
\draw (202,152.4) node [anchor=north west][inner sep=0.75pt]    {$\textcolor[rgb]{0.29,0.56,0.89}{2}$};
\draw (115,101.4) node [anchor=north west][inner sep=0.75pt]    {$\textcolor[rgb]{0.29,0.56,0.89}{1}$};
\draw (116,129.4) node [anchor=north west][inner sep=0.75pt]    {$\textcolor[rgb]{0.29,0.56,0.89}{2}$};
\draw (115,77.4) node [anchor=north west][inner sep=0.75pt]    {$\textcolor[rgb]{0.29,0.56,0.89}{{\displaystyle 0}}$};
\draw (325,106.4) node [anchor=north west][inner sep=0.75pt]    {$\textcolor[rgb]{0.29,0.56,0.89}{1}$};
\draw (267,103.4) node [anchor=north west][inner sep=0.75pt]    {$\textcolor[rgb]{0.29,0.56,0.89}{2}$};
\draw (293.48,156.2) node [anchor=north west][inner sep=0.75pt]    {$\textcolor[rgb]{0.29,0.56,0.89}{1}$};
\draw (417,102.4) node [anchor=north west][inner sep=0.75pt]    {$\textcolor[rgb]{0.29,0.56,0.89}{{\displaystyle 0}}$};
\end{tikzpicture}
\end{center}

\end{example}

The key point of Definition~\ref{def:rngraph} is that there is an $(r,n)$-graph associated to each element $X \in \mcL_{\mcA^r_n}$; for instance, the graph in Example~\ref{ex:rngraph} is the $(3,7)$-graph associated to
\[X = \{[x_1: \ldots: x_7] \; | \; x_1 = x_3 = 0, \;\; x_2 = \zeta x_4 = \zeta^2 x_6, \; \; x_5 = x_7\},\]
which can be viewed as the intersection of $H_{ij}^k$ for all $i \leq j$ such that there exists an edge from $i$ to $j$ labeled by $k$.  More precisely, we have the following result.

\begin{lemma}
\label{lem:isowithgraphs}
There is an isomorphism of posets
\begin{align*}
\mcL_{\mcA^r_n} &\rightarrow \Gamma^r_n\\
X &\mapsto G_X,
\end{align*}
in which $G_X$ is the graph on vertex set $[n]$ with an edge between vertices $i\leq j$ labeled by $k \in \Z_r$ if and only if $X \subseteq H^k_{ij}$.
\end{lemma}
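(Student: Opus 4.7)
The plan is to verify that $X \mapsto G_X$ is well-defined (lands in $\Gamma^r_n$), is a bijection with an explicit inverse, and is order-preserving. The entire argument rests on two elementary consequences of the defining equations of the $H^k_{ij}$: \emph{(a)} if $X \subseteq H^k_{ij}\cap H^{k'}_{ij}$ with $k\ne k'$, then $x_i = \zeta^k x_j = \zeta^{k'} x_j$ on $X$, which forces $x_i \equiv x_j \equiv 0$ on $X$; and \emph{(b)} if $X \subseteq H^{k_1}_{i_1 i_2}\cap H^{k_2}_{i_2 i_3}$, then $x_{i_1} = \zeta^{k_1 + k_2} x_{i_3}$ on $X$, so $X\subseteq H^{k_1+k_2}_{i_1 i_3}$.

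For well-definedness, let $G_0$ be the induced subgraph of $G_X$ on the set of vertices $i$ with $x_i \equiv 0$ on $X$, equivalently those carrying at least one self-edge. Observation (a) immediately implies that between any two vertices of $G_0$ all $r$ edges appear and that every such vertex carries all $r-1$ nontrivial self-edges, so $G_0$ is a complete $r$-flower of the required form. For vertices $i \notin G_0$ the function $x_i$ is not identically zero on $X$, so (a) forbids multi-edges between such vertices and (b) supplies the cocycle relation $k_{i_1 i_3}\equiv k_{i_1 i_2}+k_{i_2 i_3}\pmod r$; hence the connected components on these vertices are precisely the simple complete graphs $G_1,\ldots,G_b$ with compatible labels demanded by Definition~\ref{def:rngraph}.

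The inverse map sends $G\in \Gamma^r_n$ to the subspace $X_G\subseteq \P^{n-1}$ cut out by $x_i = 0$ for $i\in G_0$ together with $x_i = \zeta^{k_{ij}} x_j$ for every edge $\{i,j\}$ in a non-zero component; the compatibility condition is exactly what makes this system consistent, and $X_G$ is parameterized by a single free coordinate $t_l$ per non-zero component $G_l$. That $X_{G_X} = X$ is the standard fact that any $X\in\mcL_{\mcA^r_n}$ equals the intersection of all hyperplanes in $\mcA^r_n$ containing it. The substantive check is $G_{X_G} = G$, which amounts to ruling out any extra hyperplane $H^k_{ij}$ containing $X_G$; this reduces to a short case analysis on the pair $(i,j)$ (one of them in $G_0$ and the other not; both in distinct non-zero components; or both in the same non-zero component with $k\ne k_{ij}$), in each case deducing a relation that contradicts the generic nonvanishing and independence of the $t_l$ on $X_G$. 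Order-preservation is then automatic: $X \le Y$ in $\mcL_{\mcA^r_n}$ means $X\supseteq Y$, which holds iff every hyperplane containing $X$ also contains $Y$, i.e.\ iff $E(G_X)\subseteq E(G_Y)$ with matching labels. The main obstacle is precisely the case analysis establishing $G_{X_G}=G$; every other piece is definitional bookkeeping once observations (a) and (b) are in hand.
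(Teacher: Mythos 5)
Your proposal is correct and follows essentially the same route as the paper: verify that $G_X$ satisfies Definition~\ref{def:rngraph} via the zero-coordinate vertices and the transitivity of the relations $x_i=\zeta^k x_j$, exhibit the inverse $G\mapsto X_G$ as the intersection of the hyperplanes determined by the edges, and match the poset structures through reverse inclusion. The only detail you elide that the paper spells out is the sign bookkeeping when a path traverses an edge against its orientation (the paper's signed exponents $p_{i_mi_{m+1}}$), while conversely you give somewhat more detail than the paper on the check $G_{X_G}=G$, which it dismisses as straightforward.
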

\begin{proof}
First, we must check that $G_X$ is indeed an $(r,n)$-graph. To do so, let
\[
Z_X = \{ i \in [n] \; | \; x_i = 0 \text{ for all } x \in X\}.
\]
Then $X \subseteq H_{ij}^k$ for all $i,j \in Z_X$ and all $k \in \Z_r$, so $G_X$ contains the complete $r$-flower on vertex set $Z_X$ with edge labeling as required by condition (ii) of Definition~\ref{def:rngraph}. From here, let $G'$ be any other connected component of $G_X$. Then $G'$ is a simple graph, because if there were multiple edges (meaning $X \subseteq H^k_{ij} \cap H^{k'}_{ij}$ for some $k \neq k'$) or self-edges (meaning $X \subseteq H^k_{ii}$ for some $k \neq 0$), then their vertices would be in $Z_X$. Furthermore, $G'$ is a complete graph, since for any vertices $i \leq j \in V(G')$, the fact that $G'$ is connected means that there exists a path in $G'$ from $i$ to $j$. Denote the vertices in this path by
\[
i=i_1, i_2 \ldots, i_\ell = j.
\]
Then, by the definition of the edge labels on $G_X$, each $x \in X$ satisfies
\[
x_{i_m} = \zeta^{p_{i_mi_{m+1}}} x_{i_{m+1}}
\]
for $m=1, \ldots, \ell-1$, where
\[
p_{i_mi_{m+1}} := \begin{cases} k_{i_mi_{m+1}} & \text{ if } i_m<i_{m+1} \\ -k_{i_mi_{m+1}}  & \text{ if } i_m>i_{m+1}. \end{cases}
\]
Adding these equations for all $m$ shows that each $x \in X$ satisfies $x_i = \zeta^{p_{ij}} x_j$ with
\[
p_{ij} := \sum_{m=1}^{\ell-1} p_{i_mi_{m+1}}.
\]
This means that $X \subseteq H_{ij}^{p_{ij}}$, so $G'$ contains an edge from $i$ to $j$, proving that $G'$ is indeed a complete graph. Furthermore, applying this argument with $i = j$ shows that we have
\[
x_i = \zeta^{p_{ii}} x_i.
\]
Since $i \notin Z_X$, there exists some $x \in X$ with $x_i \neq 0$. Thus, the above equation is only possible if $p_{ii} \equiv 0 \mod r$, and applying this to paths of length $\ell =3$ proves that condition (i) of Definition~\ref{def:rngraph} is satisfied. This shows that $G_X$ is an $(r,n)$-graph, as claimed.

To see that the association $X \mapsto G_X$ is bijective, let $G$ be any $(r,n)$-graph, and let 
\[S_G = \{(i,j,k) \in [n] \times [n] \times \Z_r \; | \; i \leq j \text{ and } \exists \text{ an edge in } G \text{ from } i \text{ to } j \text{ labeled } k\}.\]
Setting
\[X_G = \bigcap_{(i,j,k) \in S_G} H_{ij}^k,\]
it is straightforward to see that the associations $X \mapsto G_X$ and $G \mapsto X_G$ are inverse.

Finally, to see that this bijection matches up the poset structures in $\mcL_{\mcA^r_n}$ and $\Gamma^r_n$, note that by definition, $X \leq Y$ in $\mcL_{\mcA^r_n}$ if and only if $X \supseteq Y$. Thus, if $G_X$ has an edge from $i$ to $j$ labeled $k$, then $X \subseteq H_{ij}^k$ and hence $Y \subseteq H_{ij}^k$. This shows that $G_Y$ also has an edge from $i$ to $j$ labeled $k$, so $G_X \leq G_Y$ in $\Gamma^r_n$. 
\end{proof}

In light of Lemma~\ref{lem:isowithgraphs}, we can determine the minimal building set of the $r$-braid arrangement by instead determining the indecomposable elements of the poset $\Gamma^r_n$. These are characterized by the next lemma. To state the lemma, we refer to a connected component of an $(r,n)$-graph as {\bf trivial} if it has no edges; in particular, note that if $G_0 \neq \emptyset$, then it is always nontrivial. The minimal element $\hat{0}$ of $\Gamma^r_n$ is the unique $(r,n)$-graph with no edges, meaning all of its connected components are trivial. 

\begin{lemma}
\label{lem:indecomposablegraphs}
An element $G \in \Gamma^r_n \setminus \{\hat{0}\}$ is indecomposable if and only if $G$ has exactly one nontrivial connected component.
\end{lemma}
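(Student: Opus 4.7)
The plan is to prove both directions by analyzing $[\hat{0}, G]$ through the connected-component structure of $G$.

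For the only if direction, since $G \neq \hat{0}$ automatically has at least one nontrivial component, I reduce to showing that if $G$ has at least two nontrivial components, then $G$ is decomposable. I will partition the nontrivial components of $G$ into two nonempty groups and let $Y_1, Y_2 \in \Gamma^r_n$ be the $(r,n)$-graphs on $[n]$ retaining only the edges of each group, so that $G = Y_1 \lor Y_2$. The fact that every $H \leq G$ has its own connected components each contained in a single connected component of $G$ (because $E(H) \subseteq E(G)$ with matching labels) lets me write each such $H$ uniquely as $H_1 \lor H_2$ with $H_i \leq Y_i$, producing the required poset isomorphism $[\hat{0}, Y_1] \times [\hat{0}, Y_2] \overset{\cong}{\rightarrow} [\hat{0}, G]$ via the join map.

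For the if direction, suppose $G$ has a unique nontrivial component $G_*$. Isolated vertices of $G$ contribute only a trivial factor, so $[\hat{0}, G] \cong [\hat{0}, G_*]$, and it suffices to show the latter is indecomposable. Assuming for contradiction a decomposition $[\hat{0}, G_*] \cong [\hat{0}, Y_1] \times [\hat{0}, Y_2]$ with $Y_1, Y_2 > \hat{0}$, the atoms of $[\hat{0}, G_*]$ partition as $A_1 \sqcup A_2$ by factor. The key observation is that for $e_1 \in A_1$ and $e_2 \in A_2$, the join $e_1 \lor e_2$ equals $(e_1, e_2)$ in the product and has exactly two atoms below it, namely $e_1$ and $e_2$. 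Contrapositively, whenever two atoms have a join with more than two atoms below it, they must lie in the same $A_i$.

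I then apply this criterion case by case. When $G_*$ is a complete graph on $m \geq 3$ vertices, atoms are single edges, and any two edges sharing a vertex join to a triangle with three atoms below; transitivity via an intermediate edge through a third vertex handles disjoint pairs. When $G_*$ is a complete $r$-flower on $z \geq 2$ vertices, atoms come in two types---single edges $\{v_i, v_j; k\}$ and single-vertex self-flowers $f_{v_i}$---and any two self-flowers at distinct vertices, or any edge atom paired with a self-flower at one of its endpoints, join to the two-vertex flower, which has $r + 2 \geq 4$ atoms below; any two edges sharing a vertex still join to a triangle. These relations chain together all atoms. The residual cases $m = 2$ and $z = 1$ are immediate since $[\hat{0}, G_*]$ is then a two-chain. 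In every scenario the criterion forces all atoms into a single part, contradicting nontriviality of the other factor.

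The main obstacle will be the flower case, where one must track two distinct types of atoms and verify the appropriate join behavior for each pair; once the rank-two atom-count criterion is identified, however, the rest reduces to a short combinatorial check, much like the standard argument for the indecomposability of the partition lattice that appears implicitly in the treatment of the classical braid arrangement.
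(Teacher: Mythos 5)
Your proof is correct; half of it matches the paper and half takes a genuinely different route. The direction ``at least two nontrivial components implies decomposable'' is essentially the paper's argument: the paper writes $[\hat{0},G]$ as a product over \emph{all} nontrivial components, with the inverse map given, as in your write-up, by restricting edges to each component, so your two-block grouping is just a coarsening of that. For the converse, the paper is shorter and avoids case analysis: assuming $[\hat{0},G]\cong[\hat{0},K_1]\times[\hat{0},K_2]$ via the join map with $\hat{0}<K_1,K_2<G$, it picks nontrivial components $H_1\subseteq K_1$ and $H_2\subseteq K_2$ and observes that any edge of $G$ joining a vertex of $H_1$ to a vertex of $H_2$ would give a single-edge element of $[\hat{0},G]$ that is not the join of an element below $K_1$ with an element below $K_2$; hence $H_1$ and $H_2$ lie in distinct components of $G$, so $G$ has at least two nontrivial components. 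You instead prove directly that the interval below a single nontrivial component is indecomposable, via the atom criterion (the join of two atoms coming from different factors has exactly two atoms below it) together with a connectivity argument on atoms, treating complete-graph components and the $r$-flower separately; this is the classical ``partition lattices are directly indecomposable'' argument adapted to $(r,n)$-graphs, and your joins and atom counts check out (a labeled triangle has three atoms below it; a two-vertex flower has $r+2\geq 4$; a single edge or one-vertex flower gives a two-chain, which is indecomposable). Two points worth making explicit in a full write-up: parallel edge atoms between the same pair of flower vertices are linked only through the self-flower atoms, so that is where they enter the chain; and the assumption $Y_1,Y_2>\hat{0}$ is automatic, since a factor $[\hat{0},\hat{0}]$ would force the join map to have image $[\hat{0},Y_2]$, which is strictly smaller than $[\hat{0},G]$. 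The trade-off is that the paper's argument is quicker and uniform in the two component types, while yours is more hands-on and yields finer structural information about which rank-two elements of $\Gamma^r_n$ tie the atoms together.
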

\begin{proof}
We prove both directions by contrapositive. First, suppose that $G \in \Gamma^r_n \setminus \{\hat{0}\}$ has multiple nontrivial connected components $G_{i_1}, \ldots, G_{i_k}$. These are not themselves $(r,n)$-graphs since their vertex set is a proper subset of $[n]$, but for any such graph $H$, we denote by $\widetilde{H}$ the $(r,n)$-graph with edges and edge labels as in $H$, so that any vertices in $[n] \setminus V(H)$ constitute trivial connected components in $\widetilde{H}$. We claim that
\begin{equation}
\label{eq:graphiso}[\hat{0}, G] \cong \prod_{j=1}^k [\hat{0}, \widetilde{G_{i_j}}],
\end{equation}
via the isomorphism
\[H \mapsto \big( \widetilde{H \cap G_{i_j}} \big)_{j=1}^k\]
with inverse given by
\[(H_1, \ldots, H_k) \mapsto H_1 \vee \cdots \vee H_k\]
for $H_j \in [\hat{0}, \widetilde{G_{i_j}}]$. The fact that these are inverses is straightforward to check: given an element $H \in [\hat{0}, G]$ and denoting $H_j:= \widetilde{H \cap G_{i_j}}$, we have
\[E(H) = E(H_1) \cup \cdots \cup E(H_k)\]
(with matching edge labels) and hence $H = H_1 \vee \cdots \vee H_k$, whereas given an element $(H_1, \ldots, H_k) \in \prod [\hat{0}, \widetilde{G_{i_j}}]$ with $H_1 \vee \cdots \vee H_k = H$, we have
\[\widetilde{H \cap G_{i_j}} = H_j.\]
This proves \eqref{eq:graphiso} and hence verifies that $G$ is decomposable.

Conversely, suppose that $G$ is decomposable, so that 
\begin{equation}
\label{eq:graphiso2}
[\hat{0}, G] \cong [\hat{0}, K_1] \times [\hat{0}, K_2]
\end{equation}
for (possibly disconnected) subgraphs $\hat{0} < K_1, K_2 < G$. 
The fact that $K_1, K_2 \neq \hat{0}$ means that they contain nontrivial connected components $H_1, H_2$, respectively, and we claim that these are contained in disjoint connected components $G_{i_1}, G_{i_2}$ of $G$. In other words, the claim is that there are no edges of $G$ connecting a vertex in $H_1$ to a vertex in $H_2$, and this is straightforward to see: if $e$ were such an edge, then the subgraph consisting only of the edge $e$ would be in $[\hat{0}, G]$ but not the join of a graph in $[\hat{0}, K_1]$ and a graph in $[\hat{0}, K_2]$, which would violate \eqref{eq:graphiso2}.
\end{proof}

\subsection{Proof of minimal building set}

Equipped with these results on $(r,n)$-graphs, we are prepared to prove the description of the minimal building set of the $r$-braid arrangement given above.

\begin{proof}[Proof of Proposition~\ref{prop:minbuildingset}]
By Lemmas~\ref{lem:isowithgraphs} and \ref{lem:indecomposablegraphs}, the indecomposable elements of the lattice $\mcL_{\mcA^r_n}$ are $X \in \mcL_{\mcA^r_n}$ such that $G_X$ has exactly one nontrivial connected component. These are precisely those $X$ of the form
\begin{equation}
\label{eq:case1}
X = \{[x_1: \cdots: x_n] \in \P^{n-1} \; | \; x_{i_1} = \cdots = x_{i_\ell} = 0\}
\end{equation}
for some $i_1, \ldots, i_\ell \in [n]$ (if the nontrivial connected component is the complete $r$-flower) or
\begin{equation}
\label{eq:case2}
X = \{[x_1: \cdots: x_n] \in \P^{n-1} \; | \; \zeta^{k_1}x_{i_1} = \cdots = \zeta^{k_\ell} x_{i_\ell}\}
\end{equation}
for some distinct $i_1, \ldots, i_\ell \in [n]$ and some $k_1, \ldots, k_\ell \in \Z_r$ (if the nontrivial connected component is not the complete $r$-flower). In the first case \eqref{eq:case1} above, $X$ is equal to the subspace spanned by the coordinate points $p_i$ for $i \notin \{i_1, \ldots, i_\ell\}$, and in the second case \eqref{eq:case2}, $X$ is equal to the subspace spanned by these same coordinate points together with the point $p_{\mathbf{a}}$ given by 
\[a_i = \begin{cases} -k_i \mod r & \text{ if } i \in \{i_1, \ldots, i_\ell\}\\
0 & \text{ if } i \notin \{i_1, \ldots, i_\ell\}.\end{cases}\]
Thus, the indecomposable elements of $\mcL_{\mcA^r_n}$ are those subspaces spanned by a subset of the coordinate points $\{p_1, \ldots, p_n\}$ and at most one root-of-unity point $p_{\mathbf{a}} = [\zeta^{a_1}: \cdots : \zeta^{a_n}]$, proving the proposition.
\end{proof}

Proposition~\ref{prop:minbuildingset} implies that the wonderful compactification of $\mcA^r_n$ with respect to its minimal building set is the iterated blow-up of $\P^{n-1}$ along all subspaces spanned by subsets of $\{p_1, \ldots, p_n\}$ and at most one element of $\{p_{\mathbf{a}} \; | \; \mathbf{a} \in \Z_r^n\}$, in any inclusion-increasing order. Toward the ultimate goal of giving this blow-up a modular interpretation, we first devote the next section to reframing it as a fiber product of the usual moduli space of curves $\Mbar_{0,n+2}$ with the Losev--Manin space $\Lbar_n$.

\section{The wonderful compactification as a fiber product}

We begin this section by recalling some preliminaries on Hassett's moduli spaces of curves with weighted marked points, following \cite{hassett_weighted_curves}.

\subsection{Background on Hassett spaces}
\label{subsec:Hassett}

For any weight vector $\w = (w_1, \ldots, w_n) \in \Q \cap (0,1]^n$ such that $\sum_{i=1}^n w_i > 2$, the moduli space $\Mbar_{0, \w}$ is a smooth Deligne--Mumford stack that provides an alternative compactification of $\M_{0,n}$ in which the marked points are allowed to coincide in a controlled way prescribed by $\w$. Specifically, elements of $\Mbar_{0,\w}$ are tuples $(C; p_1, \ldots, p_n)$ up to automorphism, where $C$ is a genus-zero curve with at worst nodal singularities, and $p_1, \ldots, p_n \in C$ are marked points with assigned weights $w_1,\ldots, w_n$, satisfying the following conditions:
\begin{enumerate}
\item the sum of the weights of any collection of coinciding marked points is $ \leq 1$;
\item the sum of the weights at all special points (i.e. marked points or half-nodes) of any irreducible component is $>2$, where half-nodes are given weight $1$.
\end{enumerate}
We refer to such curves as {\bf $\w$-stable}. Note, in particular, that $\Mbar_{0,n}$ is recovered as the special case where $w_i = 1$ for all $i$.

Part of what makes Hassett spaces interesting is that certain choices of weight vector give rise to toric moduli spaces. A simple special case is the weight vector
\[\w_\P = (\underbrace{\epsilon, \ldots, \epsilon}_{n+1 \text{ times}}, 1),\]
where $\epsilon = 1/n$. In this case, the stability condition (ii) ensures that a $\w_\P$-stable curve has a single $\P^1$ component. Choosing coordinates on this $\P^1$ in which the last marked point is $\infty$ and the second-to-last marked point is $0$, the data of a $\w_\P$-stable curve is a choice of the remaining $n$ marked points (up to automorphism via scaling), which are complex numbers that do not all coincide with $0$ by the choice of $\epsilon$. That is:
\[\Mbar_{0,\w_\P} = \P^{n-1}.\]

A more interesting toric case is given by the weight vector
\[
\w_{\text{LM}} := (\underbrace{\epsilon, \ldots, \epsilon}_{n \text{ times}},1,1),
\]
where again $\epsilon = 1/n$. The resulting moduli space is denoted
\[
\Lbar_n:= \Mbar_{0,\w_{\text{LM}}}
\]
and is referred to as {\bf Losev--Manin space} \cite{losev_manin}.  The stability condition (ii) now forces that $\w_{\text{LM}}$-stable curves are chains of projective lines with the last two marked points on the two ends:

\tikzset{every picture/.style={line width=0.75pt}} 
\[\begin{tikzpicture}[x=0.75pt,y=0.75pt,yscale=-0.8,xscale=0.8]
\draw  (43,135.75) .. controls (43,111.31) and (63.04,91.5) .. (87.75,91.5) .. controls (112.46,91.5) and (132.5,111.31) .. (132.5,135.75) .. controls (132.5,160.19) and (112.46,180) .. (87.75,180) .. controls (63.04,180) and (43,160.19) .. (43,135.75) -- cycle ;
\draw [dash pattern={on 4.5pt off 4.5pt}] (43.38,132.35) .. controls (57.22,122.09) and (70.7,119.94) .. (88.93,120.07) .. controls (107.15,120.2) and (126.57,128.26) .. (132.5,135.75) ;
\draw  (43.66,134.26) .. controls (46.29,137.61) and (58.34,141.33) .. (62.48,142.82) .. controls (66.62,144.31) and (85.6,147.79) .. (105.77,144.31) .. controls (125.94,140.83) and (127.97,139.11) .. (132.5,135.75) ;

\draw  (133,135.75) .. controls (133,111.31) and (153.04,91.5) .. (177.75,91.5) .. controls (202.46,91.5) and (222.5,111.31) .. (222.5,135.75) .. controls (222.5,160.19) and (202.46,180) .. (177.75,180) .. controls (153.04,180) and (133,160.19) .. (133,135.75) -- cycle ;
\draw [dash pattern={on 4.5pt off 4.5pt}] (133.38,132.35) .. controls (147.22,122.09) and (160.7,119.94) .. (178.93,120.07) .. controls (197.15,120.2) and (216.57,128.26) .. (222.5,135.75) ;
\draw  (133.66,134.26) .. controls (136.29,137.61) and (148.34,141.33) .. (152.48,142.82) .. controls (156.62,144.31) and (175.6,147.79) .. (195.77,144.31) .. controls (215.94,140.83) and (217.97,139.11) .. (222.5,135.75) ;

\draw  (223,135.75) .. controls (223,111.31) and (243.04,91.5) .. (267.75,91.5) .. controls (292.46,91.5) and (312.5,111.31) .. (312.5,135.75) .. controls (312.5,160.19) and (292.46,180) .. (267.75,180) .. controls (243.04,180) and (223,160.19) .. (223,135.75) -- cycle ;
\draw [dash pattern={on 4.5pt off 4.5pt}] (223.38,132.35) .. controls (237.22,122.09) and (250.7,119.94) .. (268.93,120.07) .. controls (287.15,120.2) and (306.57,128.26) .. (312.5,135.75) ;
\draw  (223.66,134.26) .. controls (226.29,137.61) and (238.34,141.33) .. (242.48,142.82) .. controls (246.62,144.31) and (265.6,147.79) .. (285.77,144.31) .. controls (305.94,140.83) and (307.97,139.11) .. (312.5,135.75) ;

\draw  (312,135.75) .. controls (312,111.31) and (332.04,91.5) .. (356.75,91.5) .. controls (381.46,91.5) and (401.5,111.31) .. (401.5,135.75) .. controls (401.5,160.19) and (381.46,180) .. (356.75,180) .. controls (332.04,180) and (312,160.19) .. (312,135.75) -- cycle ;
\draw [dash pattern={on 4.5pt off 4.5pt}] (312.38,132.35) .. controls (326.22,122.09) and (339.7,119.94) .. (357.93,120.07) .. controls (376.15,120.2) and (395.57,128.26) .. (401.5,135.75) ;
\draw  (312.66,134.26) .. controls (315.29,137.61) and (327.34,141.33) .. (331.48,142.82) .. controls (335.62,144.31) and (354.6,147.79) .. (374.77,144.31) .. controls (394.94,140.83) and (396.97,139.11) .. (401.5,135.75) ;

\draw [color={rgb, 255:red, 9; green, 114; blue, 237 } ,draw opacity=1 ][fill={rgb, 255:red, 19; green, 115; blue, 229 } ,fill opacity=1 ] (38.16,137.26) .. controls (38.16,134.5) and (40.62,132.26) .. (43.66,132.26) .. controls (46.7,132.26) and (49.16,134.5) .. (49.16,137.26) .. controls (49.16,140.02) and (46.7,142.26) .. (43.66,142.26) .. controls (40.62,142.26) and (38.16,140.02) .. (38.16,137.26) -- cycle ;
\draw [color={rgb, 255:red, 9; green, 114; blue, 237 } ,draw opacity=1 ][fill={rgb, 255:red, 19; green, 115; blue, 229 } ,fill opacity=1 ] (81.16,158.26) .. controls (81.16,155.5) and (83.62,153.26) .. (86.66,153.26) .. controls (89.7,153.26) and (92.16,155.5) .. (92.16,158.26) .. controls (92.16,161.02) and (89.7,163.26) .. (86.66,163.26) .. controls (83.62,163.26) and (81.16,161.02) .. (81.16,158.26) -- cycle ;
\draw [color={rgb, 255:red, 9; green, 114; blue, 237 } ,draw opacity=1 ][fill={rgb, 255:red, 19; green, 115; blue, 229 } ,fill opacity=1 ] (101.16,108.26) .. controls (101.16,105.5) and (103.62,103.26) .. (106.66,103.26) .. controls (109.7,103.26) and (112.16,105.5) .. (112.16,108.26) .. controls (112.16,111.02) and (109.7,113.26) .. (106.66,113.26) .. controls (103.62,113.26) and (101.16,111.02) .. (101.16,108.26) -- cycle ;
\draw [color={rgb, 255:red, 9; green, 114; blue, 237 } ,draw opacity=1 ][fill={rgb, 255:red, 19; green, 115; blue, 229 } ,fill opacity=1 ] (158.16,156.26) .. controls (158.16,153.5) and (160.62,151.26) .. (163.66,151.26) .. controls (166.7,151.26) and (169.16,153.5) .. (169.16,156.26) .. controls (169.16,159.02) and (166.7,161.26) .. (163.66,161.26) .. controls (160.62,161.26) and (158.16,159.02) .. (158.16,156.26) -- cycle ;
\draw [color={rgb, 255:red, 9; green, 114; blue, 237 } ,draw opacity=1 ][fill={rgb, 255:red, 19; green, 115; blue, 229 } ,fill opacity=1 ] (260.16,103.26) .. controls (260.16,100.5) and (262.62,98.26) .. (265.66,98.26) .. controls (268.7,98.26) and (271.16,100.5) .. (271.16,103.26) .. controls (271.16,106.02) and (268.7,108.26) .. (265.66,108.26) .. controls (262.62,108.26) and (260.16,106.02) .. (260.16,103.26) -- cycle ;
\draw [color={rgb, 255:red, 9; green, 114; blue, 237 } ,draw opacity=1 ][fill={rgb, 255:red, 19; green, 115; blue, 229 } ,fill opacity=1 ] (277.16,109.26) .. controls (277.16,106.5) and (279.62,104.26) .. (282.66,104.26) .. controls (285.7,104.26) and (288.16,106.5) .. (288.16,109.26) .. controls (288.16,112.02) and (285.7,114.26) .. (282.66,114.26) .. controls (279.62,114.26) and (277.16,112.02) .. (277.16,109.26) -- cycle ;
\draw [color={rgb, 255:red, 9; green, 114; blue, 237 } ,draw opacity=1 ][fill={rgb, 255:red, 19; green, 115; blue, 229 } ,fill opacity=1 ] (336.16,160.26) .. controls (336.16,157.5) and (338.62,155.26) .. (341.66,155.26) .. controls (344.7,155.26) and (347.16,157.5) .. (347.16,160.26) .. controls (347.16,163.02) and (344.7,165.26) .. (341.66,165.26) .. controls (338.62,165.26) and (336.16,163.02) .. (336.16,160.26) -- cycle ;
\draw [color={rgb, 255:red, 9; green, 114; blue, 237 } ,draw opacity=1 ][fill={rgb, 255:red, 19; green, 115; blue, 229 } ,fill opacity=1 ] (396,135.75) .. controls (396,132.99) and (398.46,130.75) .. (401.5,130.75) .. controls (404.54,130.75) and (407,132.99) .. (407,135.75) .. controls (407,138.51) and (404.54,140.75) .. (401.5,140.75) .. controls (398.46,140.75) and (396,138.51) .. (396,135.75) -- cycle ;

\draw (-5,124.7) node [anchor=north west][inner sep=0.75pt]  {$p_{n+1}$};
\draw (408,124.7) node [anchor=north west][inner sep=0.75pt]  {$p_{n+2}$};
\end{tikzpicture}
\]
Each component of such a curve has two distinguished special points (the half-nodes or the ``heavy'' points $p_{n+1}, p_{n+2}$), and one can choose coordinates so that these distinguished special points are $0$ and $\infty$, with $0$ being the one closer to the marked point $p_{n+1}$ and $\infty$ being the one closer to the marked point $p_{n+2}$. In these coordinates, we can view $p_1, \ldots, p_n \in \C$, and there is an action of $(\C^*)^n$ on $\Lbar_n$ in which $(t_1, \ldots, t_n)$ acts by sending the marked point $p_i$ in each $\P^1$ component to the point $t_ip_i$ in the same component. In fact, this action endows $\Lbar_n$ with the structure of a smooth toric variety: it is the toric variety associated to the braid fan \cite[Section 2]{losev_manin}.

Another perspective on $\Lbar_n$ and why it is toric is given in terms of the {\bf weight-reduction maps} between Hassett spaces. Namely, for any weight vectors $\w$ and $\w'$ with $w_i \geq w_i'$ for all $i$, there is a birational map
\[
\Mbar_{0,\w} \rightarrow \Mbar_{0,\w'}
\]
given by iteratively contracting components of a $\w$-stable curve that fail to be $\w'$-stable. Applying this to the weight vectors $\w_{\text{LM}}$ and $\w_{\P}$ above, one obtains a map
\[
d: \Lbar_n \rightarrow \P^{n-1}.
\]
Analyzing this map via \cite[Proposition 4.5]{hassett_weighted_curves}, one sees that it is an iterated blow-up of $\P^{n-1}$ along all subspaces spanned by subsets of the coordinate points, in dimension-increasing order. Because such subspaces are all torus-invariant, the fact that $\Lbar_n$ is toric follows.

Along the same lines, there is a weight-reduction map
\[
c: \Mbar_{0,n+2} \rightarrow \Lbar_n,
\]
which can be viewed as an iterated blow-up of $\Lbar_n$ along the strict transforms of all subspaces of $\P^{n-1}$ spanned by $[1:1: \cdots: 1]$ together with some subset of the coordinate points. The composition of the two reduction maps $d \circ c: \Mbar_{0,n+2} \rightarrow \P^{n-1}$ recovers the Kapranov description of $\Mbar_{0,n+2}$ as an iterated blow-up discussed in Example~\ref{ex:braid}. (The crucial difference between this blow-up and the one discussed in the previous paragraph is that, since $[1:1: \cdots: 1]$ is not torus-invariant, the resulting blow-up is not toric. In this sense, $\Lbar_n$ can be viewed as ``the last toric blow-up on the way to $\Mbar_{0,n+2}$.'')

\subsection{The wonderful compactification as a fiber product}

In this section, we realize the wonderful compactification of the $r$-braid arrangement in $\P^{n-1}$ as a fiber product $\Mbar_{0,n+2}\times_{\Lbar_n}\Lbar_n$. The key insight that we require is that the $r$-braid arrangement is the preimage of the braid arrangement under the map $\P^{n-1}\rightarrow\P^{n-1}$ that takes the $r$th power of each coordinate. Since blow-ups commute with flat base change, we might then try to conclude that the wonderful compactification of the $r$-braid arrangement is the fiber product $\Mbar_{0,n+2}\times_{\P^{n-1}}\P^{n-1}$, where the maps are the iterated blow-up $d \circ c: \Mbar_{0,n+2} \rightarrow \P^{n-1}$ and the $r$th power map. However, this is not quite right: what goes wrong is that the coordinate hyperplanes happen to lie in the ramification locus of the $r$th power map, so instead of blowing up along these subspaces, the fiber product $\Mbar_{0,n+2}\times_{\P^{n-1}}\P^{n-1}$ blows up along nonreduced schemes supported on the coordinate subspaces. To remedy this, we first blow-up the coordinate spaces, obtaining $\Lbar_n$, and then we use the $r$th power map on $\Lbar_n$ instead.

More precisely, since $\Lbar_n$ is a smooth toric variety, there is a natural map $q:\Lbar_n\rightarrow \Lbar_n$ that extends the $r$th power map on the torus. In the coordinate description of elements of $\Lbar_n$ from the previous subsection, $q$ preserves the underlying curve $C$ and the last two marked points, and it acts on the first $n$ marked points by $[x:y] \mapsto [x^r: y^r]$. Observe that $q$ is finite locally free of degree $(n-1)^r$: in the affine toric chart of any torus-invariant point of $\Lbar_n$, the pullback $q^*$ can be realized as the inclusion of polynomial rings
\begin{align*}
    \C[x_1,\dots,x_{n-1}]&\rightarrow\C[x_1,\dots,x_{n-1}]\\
    f(x_1,\dots,x_{n-1})&\mapsto f(x_1^r,\dots,x_{n-1}^r),
\end{align*}
which exhibits the codomain as a free module of rank $(n-1)^r$ over the domain. Being finite locally free implies, in particular, that $q$ is finite and flat. Note that the ramification locus of $q$ is the toric boundary of $\Lbar_n$.

With this notation in place, the main result of this section is the following.

\begin{proposition}
\label{prop:WCfiberproduct}
The wonderful compactification of $\mcA^r_n$ with respect to its minimal building set is the fiber product $\Mbar^r_n$ in the following diagram:
\[  
\xymatrix{
  \Mbar^r_n \ar[r]\ar[d] & \Lbar_n\ar[d]^{q}\\
  \Mbar_{0,n+2}\ar[r]^-{c} & \Lbar_n.}
\]
\end{proposition}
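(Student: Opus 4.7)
The strategy is to realize both sides of the claimed equality as iterated blow-ups of $\Lbar_n$ along matching sequences of subschemes. On one hand, Proposition~\ref{prop:minbuildingset}, together with the description of $d: \Lbar_n \to \P^{n-1}$ as the iterated blow-up of $\P^{n-1}$ along all subspaces spanned by subsets of the coordinate points, realizes the wonderful compactification of $\mcA^r_n$ as the iterated blow-up of $\Lbar_n$ along the strict transforms of the remaining subspaces in the minimal building set, namely
\[
V_{S,\mathbf{a}} := \mathrm{span}\bigl(\{p_i\}_{i \in S} \cup \{p_\mathbf{a}\}\bigr) \subseteq \P^{n-1},
\]
for $S \subsetneq [n]$ and $\mathbf{a} \in \Z_r^n$, carried out in any inclusion-increasing order. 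On the other hand, the description of $c: \Mbar_{0,n+2} \to \Lbar_n$ recalled in Section~\ref{subsec:Hassett} realizes $c$ as the iterated blow-up of $\Lbar_n$ along the strict transforms of the subspaces $V_{S, \mathbf{0}} := \mathrm{span}(\{p_i\}_{i \in S} \cup \{[1{:}1{:}\cdots{:}1]\})$ in the same order. Since $q$ is finite and flat, iterated blow-ups commute with pullback along $q$ (with strict transforms tracked at each stage), and thus the fiber product $\Mbar^r_n = \Mbar_{0,n+2} \times_{\Lbar_n} \Lbar_n$ is the iterated blow-up of $\Lbar_n$ along the scheme-theoretic preimages $q^{-1}(V_{S, \mathbf{0}}^{\mathrm{st}})$ in the corresponding order.

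The proof then reduces to identifying, for each $S$, the preimage $q^{-1}(V_{S, \mathbf{0}}^{\mathrm{st}})$ with the disjoint union $\bigsqcup_\mathbf{a} V_{S, \mathbf{a}}^{\mathrm{st}}$, where $\mathbf{a}$ ranges over representatives giving distinct subspaces (two tuples agreeing up to a diagonal $\Z_r$-shift on $[n]\setminus S$ yield the same subspace). Set-theoretically, this is a direct computation: $V_{S, \mathbf{0}}$ is cut out in $\P^{n-1}$ by the equations $x_i = x_j$ for $i, j \notin S$, so its preimage under the $r$th power map is cut out by $x_i^r = x_j^r$, which decomposes exactly as the union of the $V_{S, \mathbf{a}}$. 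Moreover, any two distinct $V_{S, \mathbf{a}}$ intersect in $\P^{n-1}$ only within the coordinate subspace $U_S := \mathrm{span}(\{p_i\}_{i \in S})$, which is itself a blow-up center in $d$; hence their strict transforms $V_{S, \mathbf{a}}^{\mathrm{st}}$ are pairwise disjoint in $\Lbar_n$. Once disjointness is in hand, blowing up $\bigsqcup_\mathbf{a} V_{S, \mathbf{a}}^{\mathrm{st}}$ is the same as blowing up each component in any order, and reassembling across all $S$ in inclusion-increasing order reproduces exactly the iterated blow-up of $\Lbar_n$ described in the first paragraph.

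The main obstacle is the scheme-theoretic refinement of the above identification: a priori, $q^{-1}(V_{S, \mathbf{0}}^{\mathrm{st}})$ could carry non-reduced structure. Reducedness is automatic over the torus, where $q$ is \'etale, so the remaining task is to show that $V_{S, \mathbf{0}}^{\mathrm{st}}$ meets the toric boundary of $\Lbar_n$ (the ramification locus of $q$) transversely. This transversality is precisely the phenomenon noted in the discussion preceding the proposition which forces the detour through $\Lbar_n$ rather than working directly with a fiber product over $\P^{n-1}$; I would verify it by a local computation in the toric affine charts of $\Lbar_n$, using the explicit description of $\Lbar_n$ as the iterated blow-up of $\P^{n-1}$ along the coordinate subspaces. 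With reducedness established, $q^{-1}(V_{S, \mathbf{0}}^{\mathrm{st}})$ equals $\bigsqcup_\mathbf{a} V_{S, \mathbf{a}}^{\mathrm{st}}$ as schemes, and the two iterated blow-up descriptions of $\Lbar_n$ match, completing the proof.
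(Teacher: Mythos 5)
Your outline coincides with the paper's: decompose $c$ as an iterated blow-up of $\Lbar_n$ along strict transforms of the subspaces spanned by $[1:\cdots:1]$ and coordinate points, use flatness of $q$ so that blow-ups commute with base change, and identify the pulled-back centers with strict transforms of the subspaces spanned by a point $p_{\mathbf{a}}$ and coordinate points; your set-theoretic identification $q^{-1}(V_{S,\mathbf{0}})=\bigcup_{\mathbf{a}}V_{S,\mathbf{a}}$ and the disjointness of the strict transforms are fine and parallel the paper's statement $W_k=q^{-1}(Z_k)$. The genuine gap is exactly at the step you flag but do not carry out: promoting this to a scheme-theoretic identification of the pullback of each strict transform with the (reduced) strict transform of its preimage. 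Your proposed fix---checking that $V_{S,\mathbf{0}}^{\mathrm{st}}$ meets the toric boundary transversely via an unspecified computation in toric charts---is unproved, and the implication ``transverse to the ramification divisor $\Rightarrow$ reduced scheme-theoretic preimage'' is itself asserted without argument and is delicate at points where the center meets several boundary divisors at once. Moreover, what must actually be pulled back at stage $k$ is the strict transform inside the intermediate space $Y_k$ under the base-changed map $q_k\colon X_k\to Y_k$, not a subvariety of $\Lbar_n$ under $q$; your phrase ``with strict transforms tracked at each stage'' is precisely the assertion that needs proof, since strict transforms do not automatically commute with flat pullback---the failure is exactly the possible non-reducedness---and a transversality check on $\Lbar_n$, even if completed, would have to be redone for each $q_k$.

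The paper resolves this with a cleaner and weaker hypothesis: Lemma~\ref{lemma:flf} shows that for any finite locally free map, the scheme-theoretic pullback of $\overline{Z}$ equals $\overline{q^{-1}(Z)}$ provided no irreducible component of $\overline{Z}$ lies entirely inside the ramification locus. That condition is immediate for the centers here, requires no transversality or explicit chart computation, and is preserved under the base changes $q_k$ (finite locally free is stable under pullback), so it can be applied inductively at every stage of the tower $X_{n-2}\to\cdots\to X_0=\Lbar_n$. If you replace your transversality step by such a statement (or prove it), your argument closes; as written, the crucial reducedness claim is deferred rather than established.
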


\begin{proof}
We begin by setting up notation. Viewing $\Lbar_n$ as the iterated blow-up of $\P^{n-1}$ along the coordinate subspaces of $\P^{n-1}$, let $\mathcal{Z}$ be the collection of strict transforms of the linear subspaces in $\P^{n-1}$ spanned by $[1:\dots:1]$ together with a set of coordinate points, and let $\mathcal{W}$ be the collection of strict transforms of the linear subspaces in $\P^{n-1}$ spanned by one of the points $p_{\mathbf{a}} = [\zeta^{a_1}: \cdots: \zeta^{a_n}]$ together with a set of coordinate points. As discussed in the previous subsection, $c:\Mbar_{0,n+2}\rightarrow\Lbar_n$ is the iterated blow-up along the elements of $\mathcal{Z}$, in any inclusion-increasing order.

We aim to prove that $\Mbar_n^r\rightarrow\Lbar_n$ is the iterated blow-up along the elements of $\mathcal{W}$. Once we have accomplished this, we can combine it with the fact that $\Lbar_n\rightarrow\P^{n-1}$ is the iterated blow-up along the coordinate subspaces of $\P^{n-1}$ to see that $\Mbar_n^r\rightarrow\P^{n-1}$ is the iterated blow-up along the collection of linear subspaces spanned by a subset of the coordinate points and at most one of the points $p_{\mathbf{a}}$. By Proposition~\ref{prop:minbuildingset}, this is precisely the minimal building set of $\mcA^r_n$, from which we conclude that $\Mbar^r_n$ is the wonderful compactification with respect to this building set, as desired.

To prove that $\Mbar_n^r\rightarrow\Lbar_n$ is the iterated blow-up along the elements of $\mathcal{W}$, begin by decomposing the map $c$ as follows:
\[
\Mbar_{0,n+2}=Y_{n-2}\stackrel{c_{n-3}}{\longrightarrow} Y_{n-3}\stackrel{c_{n-4}}{\longrightarrow}\cdots\stackrel{c_{1}}{\longrightarrow} Y_1\stackrel{c_{0}}{\longrightarrow} Y_0 =\Lbar_n,
\]
where the map $c_k$ is the blow-up of $Y_k$ along the union of the strict transforms of the $k$-dimensional elements of $\mathcal{Z}$. Denote the intermediate compositions of these blow-up maps by $\hat c_k=c_0\circ\dots\circ c_{k-1}:Y_k\rightarrow\Lbar_n$, and let $\widetilde Z_k\subseteq Y_k$ denote the union of the strict transforms of the $k$-dimensional elements of $\mathcal{Z}$:
\[
\widetilde Z_k=\overline{\hat c_k^{-1}(Z_k\setminus Z_{k-1})},
\]
where $Z_k\subseteq\Lbar_n$ is the union of the $k$-dimensional elements of $\mathcal{Z}$. With this notation, we have $Y_{k+1}=\mathrm{Bl}_{\widetilde Z_k}(Y_k)$.

Recursively define $X_0=\Lbar_n$ and $X_k=Y_k\times_{Y_{k-1}}X_{k-1}$, so that every square in the following diagram is a pullback:
\[
\xymatrix{
  \Mbar_n^r=X_{n-2} \ar[r]\ar[d] & X_{n-3} \ar[r]\ar[d] & \cdots\ar[r] & X_{1} \ar[r]\ar[d] & X_0=\Lbar_n\ar[d]^{q}\\
  \Mbar_{0,n+2}=Y_{n-2}\ar[r] & Y_{n-3}\ar[r] & \cdots \ar[r] & Y_1\ar[r] & Y_0=\Lbar_n.}
\]
We prove inductively that the map $X_k\rightarrow \Lbar_n$ is the iterated blow-up along the elements of $\mathcal{W}$ of dimension less than $k$; the desired result then follows from the $k=n-2$ case.

To prove the induction step, consider the diagram
\[  
\xymatrix{
  X_{k+1} \ar[r]^{b_k}\ar[d]^{q_{k+1}} & X_k\ar[d]^{q_k}\\
  Y_{k+1} \ar[r]^{c_k} & Y_k.}
\]
Since $q$ is flat and since flatness is preserved by base change, $q_k$ is flat. Furthermore, since blow-ups commute with flat base change (see \cite[Exercise 24.1.P]{vakil_rising}) and since $Y_{k+1}=\mathrm{Bl}_{\widetilde Z_k}(Y_k)$, it follows that $b_k$ is a blow-up:
\[
X_{k+1}=\mathrm{Bl}_{\widetilde Z_k\times_{Y_k} X_k}(X_k).
\]
It remains to study the blow-up locus $\widetilde Z_k\times_{Y_k} X_k\subseteq X_k$; we must argue that it is equal to the strict transform of the union of the $k$-dimensional elements of $\mathcal{W}$. More precisely, after unraveling definitions, we must argue that
\begin{equation}\label{eq:fiberequation}
\overline{\hat c_k^{-1}(Z_k\setminus Z_{k-1})}\times_{Y_k}X_k=\overline{\hat b_k^{-1}(W_k\setminus W_{k-1})},
\end{equation}
where $W_k$ is the union of the $k$-dimensional elements of $\mathcal{W}$ and $\hat b_k=b_0\circ \cdots \circ b_{k-1}$. By definition of $\mathcal{Z}$ and $\mathcal{W}$, it is not hard to see that $W_k=q^{-1}(Z_k)$, and it then follows that $W_k\setminus W_{k-1}=q^{-1}(Z_k\setminus Z_{k-1})$. Since $W_k\setminus W_{k-1}$ is disjoint from the blow-up locus of $\hat b_k$ and $Z_k\setminus Z_{k-1}$ is disjoint from the blow-up locus of $\hat c_k$, it then follows that
\[
\hat b_k^{-1}(W_k\setminus W_{k-1}) = q_{k+1}^{-1}(\hat c_k^{-1}(Z_k\setminus Z_{k-1})).
\]
Lastly, since $q$ is finite locally free, which is preserved by base change, so is $q_{k+1}$, and since none of the components of $Z_k$ are contained entirely within the ramification locus of $q_{k+1}$, the equality \eqref{eq:fiberequation} is a special case of the lemma below.
\end{proof}

\begin{lemma}\label{lemma:flf}
Let $q:X\rightarrow Y$ be a finite locally free map of irreducible varieties and let $Z\subseteq Y$ be a subset such that none of the irreducible components of the Zariski closure $\overline Z$ lie entirely within the ramification locus of $q$. Then
\[
\overline Z\times_Y X=\overline{q^{-1}(Z)}.
\]
\end{lemma}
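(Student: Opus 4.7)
The plan is to show that $\overline{Z}\times_Y X$, which by definition of the scheme-theoretic fiber product is the preimage $q^{-1}(\overline{Z})$ with $\overline{Z}$ carrying its reduced structure, is already reduced as a subscheme of $X$ and has underlying set equal to $\overline{q^{-1}(Z)}$. Once both of these are established, the result follows, since $\overline{q^{-1}(Z)}$ is by definition the reduced closed subscheme of $X$ supported on that set.

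For the equality of underlying sets, the inclusion $\overline{q^{-1}(Z)}\subseteq q^{-1}(\overline{Z})$ is immediate because $q^{-1}(\overline{Z})$ is closed and contains $q^{-1}(Z)$. For the reverse, I would use that any finite flat morphism is open. Given $x\in q^{-1}(\overline{Z})$ and an open neighborhood $U\ni x$, the image $q(U)$ is open in $Y$ and contains $q(x)\in\overline{Z}$, so by density of $Z$ in $\overline{Z}$ we get $q(U)\cap Z\neq\emptyset$; lifting back shows that $U$ meets $q^{-1}(Z)$, so $x\in\overline{q^{-1}(Z)}$.

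The main obstacle, and the step where the ramification hypothesis is crucial, is verifying that $q^{-1}(\overline{Z})$ is reduced. My approach is to invoke Serre's criterion $(R_0)+(S_1)$. For $(R_0)$: each generic point $\xi$ of $q^{-1}(\overline{Z})$ maps under the finite morphism $q$ to the generic point $\eta_C$ of some irreducible component $C$ of $\overline{Z}$; since $C$ is not contained in the ramification locus, $q$ is étale near $\eta_C$, so the local ring $\mathcal{O}_{q^{-1}(\overline{Z}),\xi}$ is a finite separable extension of the residue field $\kappa(\eta_C)$ (itself a field, since $\overline{Z}$ is reduced), hence a field. For $(S_1)$: the base change of $q$ to $\overline{Z}$ gives a finite flat morphism $q^{-1}(\overline{Z})\to\overline{Z}$, and the standard description of associated points under a flat morphism (associated points of the source are those lying over associated points of the base and becoming associated in the fiber there) applies. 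Since $\overline{Z}$ is reduced, its associated points are exactly the $\eta_C$'s; and the fiber over each $\eta_C$ is a reduced Artinian $\kappa(\eta_C)$-algebra by the étale assumption, hence has no embedded primes. Thus the associated points of $q^{-1}(\overline{Z})$ are exactly generic points of its components.

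Combining these, $q^{-1}(\overline{Z})$ is a reduced closed subscheme of $X$ with underlying set $\overline{q^{-1}(Z)}$, so it coincides with $\overline{q^{-1}(Z)}$ as a subscheme, as desired. The topological comparison is essentially formal once one remembers that finite flat morphisms are open; the real technical content sits in the Serre-criterion argument, where one must correctly translate the hypothesis on components of $\overline{Z}$ into a statement about reducedness of fibers over the associated points of $\overline{Z}$.
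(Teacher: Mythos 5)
Your proof is correct, but it takes a genuinely different route from the paper's. The paper works directly with coordinate rings: after reducing to the affine case where $q^*:\C[Y]\hookrightarrow\C[X]$ presents $\C[X]$ as a free $\C[Y]$-module, it proves the ideal-theoretic equality $\mathcal{I}(Z)\C[X]=\mathcal{I}(q^{-1}(Z))$ by hand, expanding $g\in\mathcal{I}(q^{-1}(Z))$ in a module basis $f_1,\dots,f_m$, evaluating at the $m$ distinct preimages of an unramified point $z\in Z$, and using that the images of the $f_i$ form a $\C$-basis of $\C[X]/\mathcal{I}(z)\C[X]\cong\C^m$ to conclude that the coefficient functions vanish at $z$; density of the unramified locus in $\overline Z$ then finishes the argument. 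You instead argue scheme-theoretically: you identify the fiber product with the scheme-theoretic preimage of $\overline Z$, settle the underlying sets via openness of finite flat morphisms, and then prove reducedness via Serre's $(R_0)+(S_1)$ criterion, using the standard characterization of associated points of the source of a flat morphism in terms of associated points of the base and of the fibers. Both proofs pivot on the same key input --- $q$ is \'etale over the generic points of (the components of) $\overline Z$, which is exactly what the ramification hypothesis supplies --- but the paper's version is more elementary and self-contained, while yours trades explicit computation for general machinery and makes it transparent that the only possible failure mode is a non-reduced fiber product, which is precisely the phenomenon illustrated by the remark following the lemma.
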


\begin{remark}
    The lemma fails if a component of $\overline Z$ lies within the ramification locus of $q$. For example, if $X=Y=\A^1$, $q(x)=x^2$, and $Z=\{0\}$, then the fiber product is a point of multiplicity two supported at the origin, while the closure of the preimage is a point of multiplicity one supported at the origin.
\end{remark}

\begin{proof}[Proof of Lemma~\ref{lemma:flf}]
It suffices to prove the lemma in the case where $X$ and $Y$ are affine and $q^*:\C[Y]\rightarrow\C[X]$ is an inclusion that endows $\C[X]$ with the structure of a free module over $\C[Y]$, since our hypotheses imply that $Y$ has an affine open cover over which the restriction of $q$ has this form.  In this case,
\[
\overline Z\times_Y X=\mathrm{Spec}\left(\frac{\C[Y]}{\mathcal{I}(Z)}\otimes_{\C[Y]}\C[X]\right)=\mathrm{Spec}\left(\frac{\C[X]}{\mathcal{I}(Z)\C[X]}\right)
\]
and
\[
\overline{q^{-1}(Z)}=\Spec\left(\frac{\C[X]}{\mathcal{I}(q^{-1}(Z))}\right).
\]
Thus, we must prove that $\mathcal{I}(Z)\C[X]=\mathcal{I}(q^{-1}(Z))$. The inclusion $\mathcal{I}(Z)\C[X]\subseteq\mathcal{I}(q^{-1}(Z))$ simply follows from the fact that the inclusion $q^*: \C[Y] \rightarrow \C[X]$ maps $\mathcal{I}(Z)$ into $\mathcal{I}(q^{-1}(Z))$, so the crux of the lemma is proving the other inclusion.

Let $g\in \mathcal{I}(q^{-1}(Z))$. Choose a basis $f_1,\dots,f_m$ of $\C[X]$ as a module over $\C[Y]$, and write $g=g_1f_1+\dots+g_mf_m$ for some $g_1,\dots,g_m\in\C[Y]$. Note that $m$ is the degree of the finite map $q$. Let $z\in Z$ be any point over which $q$ is unramified, and set $q^{-1}(z)=\{x_1,\dots,x_m\}$. Evaluating $g$ at $x_1,\dots,x_m$ and noting that $g(x_j)=0$ and $g_i(x_j)=g_i(z)$ for all $i,j$, we obtain equations
\[
0=g_1(z)f_1(x_j)+\dots+g_m(z)f_m(x_j)\;\;\;\text{ for  }\;\;\;j=1,\dots,m.
\]
In other words, the function
\[g_1(z)f_1 + \dots + g_m(z)f_m \in \C[X]\]
vanishes at all the preimages $x_1, \ldots, x_m$ of $z$, which means that it descends to zero in the quotient
\[\frac{\C[X]}{\mathcal{I}(z)\C[X]} \cong \left(\frac{\C[Y]}{\mathcal{I}(z)}\right)^m \cong \C^m.\]
Given that the images of $f_1, \ldots, f_m$ in this quotient form a basis as a $\C$-vector space, we conclude that $g_1(z) = \cdots = g_m(z) = 0$.  Since this holds for every $z\in Z$ over which $q$ is unramified, and since these points form a dense subset of $\overline Z$ by our hypotheses on $Z$, it then follows that $g_1,\dots,g_n\in\mathcal{I}(Z)$. The expression $g=g_1f_1+\dots+f_ng_n$ then implies that $g \in \mathcal{I}(Z)\C[X]$, as desired.
\end{proof}

At this point, we have successfully identified the wonderful compactification of the $r$-braid arrangement with the fiber product $\Mbar^r_n$ of a diagram in which three of the corners are moduli spaces.  It likely comes as no surprise, then, that $\Mbar^r_n$ has a modular interpretation, as well. We turn to this interpretation in the next and final section.

\section{The wonderful compactification as a moduli space}\label{sec:modulispace}

We begin this section by describing the moduli problem of interest, and we then prove that the fiber product $\Mbar^r_n$ is indeed a fine moduli space for this moduli problem.  Throughout, as above, we fix a primitive $r$th root of unity $\zeta$. Furthermore, all schemes are assumed to be finite-type over $\C$.

\subsection{The moduli problem}

Fix $r \geq 2$ and $n \geq 1$. We will refer to the objects in our moduli space as {\bf heavy $(r,n)$-curves}, to distinguish them from the $(r,n)$-curves studied in \cite{cdlr_permutohedral}; their definition is as follows.

\begin{definition}\label{def:rn_curve}
A {\bf heavy $(r,n)$-curve} is a nodal curve $C$ of arithmetic genus zero, equipped with $rn+2$ distinct marked points (denoted by $x^+$, $x^-$, and $z_i^j$ for $i \in [n]$ and $j \in \Z_r$) as well as an order-$r$ automorphism $\sigma: C \rightarrow C$, subject to the following conditions:
\begin{itemize}
  \item $\sigma(x^+) = x^+$ and $\sigma(x^-) = x^-$;
  \item $\sigma(z_i^j) = z_i^{j+1 \mod r}$ for each $i \in [n]$ and $j \in \Z_r$;
  \item $\sigma$ is generically free (that is, $\sigma$ has finite fixed locus);
  \item the $(rn+2)$-pointed curve $(C; \{z_i^j\}, x^{\pm})$ is stable (i.e., it is an element of $\Mbar_{0,rn+2}$).
\end{itemize}
These conditions ensure that $C$ consists of a central chain of $\P^1$'s connecting $x^+$ to $x^-$, to which the remaining trees of $\P^1$'s are attached with $r$-fold symmetry (see Figure~\ref{fig:rncurve}); to see this, note that any component invariant under the order-$r$ automorphism $\sigma$ has exactly two $\sigma$-fixed points, and it is only at these fixed points that the $\sigma$-invariant components can meet one another.  For each component $P \cong \P^1$ of the central chain, denote by $p^+$ and $p^-$ the $\sigma$-fixed points in the directions of $x^+$ and $x^-$, respectively.  In addition to the conditions above, we require that, in coordinates on $P$ in which $p^+ = \infty$ and $p^- = 0$, the automorphism $\sigma$ is given by multiplication by the fixed primitive root of unity $\zeta$.
\end{definition}

\tikzset{every picture/.style={line width=0.75pt}} 
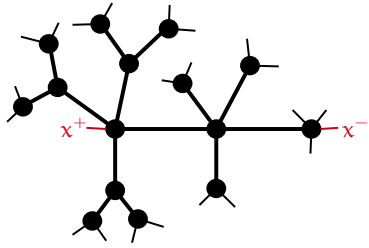
\begin{figure}[ht]
\begin{tikzpicture}[x=0.75pt,y=0.75pt,yscale=-1,xscale=1]

\draw [color={rgb, 255:red, 0; green, 0; blue, 0 } ,draw opacity=1 ][fill={rgb, 255:red, 0; green, 0; blue, 0 } ,fill opacity=1 ] (100.5,115.4) .. controls (100.5,112.97) and (102.51,111) .. (105,111) .. controls (107.49,111) and (109.5,112.97) .. (109.5,115.4) .. controls (109.5,117.83) and (107.49,119.8) .. (105,119.8) .. controls (102.51,119.8) and (100.5,117.83) .. (100.5,115.4) -- cycle ;
\draw [color={rgb, 255:red, 0; green, 0; blue, 0 } ,draw opacity=1 ][fill={rgb, 255:red, 0; green, 0; blue, 0 } ,fill opacity=1 ] (151.5,115.4) .. controls (151.5,112.97) and (153.51,111) .. (156,111) .. controls (158.49,111) and (160.5,112.97) .. (160.5,115.4) .. controls (160.5,117.83) and (158.49,119.8) .. (156,119.8) .. controls (153.51,119.8) and (151.5,117.83) .. (151.5,115.4) -- cycle ;
\draw [color={rgb, 255:red, 0; green, 0; blue, 0 } ,draw opacity=1 ][fill={rgb, 255:red, 0; green, 0; blue, 0 } ,fill opacity=1 ] (199.5,115.4) .. controls (199.5,112.97) and (201.51,111) .. (204,111) .. controls (206.49,111) and (208.5,112.97) .. (208.5,115.4) .. controls (208.5,117.83) and (206.49,119.8) .. (204,119.8) .. controls (201.51,119.8) and (199.5,117.83) .. (199.5,115.4) -- cycle ;
\draw [color={rgb, 255:red, 0; green, 0; blue, 0 } ,draw opacity=1 ][fill={rgb, 255:red, 0; green, 0; blue, 0 } ,fill opacity=1 ] (107.5,82.4) .. controls (107.5,79.97) and (109.51,78) .. (112,78) .. controls (114.49,78) and (116.5,79.97) .. (116.5,82.4) .. controls (116.5,84.83) and (114.49,86.8) .. (112,86.8) .. controls (109.51,86.8) and (107.5,84.83) .. (107.5,82.4) -- cycle ;
\draw [color={rgb, 255:red, 0; green, 0; blue, 0 } ,draw opacity=1 ][fill={rgb, 255:red, 0; green, 0; blue, 0 } ,fill opacity=1 ] (71.5,94.4) .. controls (71.5,91.97) and (73.51,90) .. (76,90) .. controls (78.49,90) and (80.5,91.97) .. (80.5,94.4) .. controls (80.5,96.83) and (78.49,98.8) .. (76,98.8) .. controls (73.51,98.8) and (71.5,96.83) .. (71.5,94.4) -- cycle ;
\draw [color={rgb, 255:red, 0; green, 0; blue, 0 } ,draw opacity=1 ][fill={rgb, 255:red, 0; green, 0; blue, 0 } ,fill opacity=1 ] (100.5,146.4) .. controls (100.5,143.97) and (102.51,142) .. (105,142) .. controls (107.49,142) and (109.5,143.97) .. (109.5,146.4) .. controls (109.5,148.83) and (107.49,150.8) .. (105,150.8) .. controls (102.51,150.8) and (100.5,148.83) .. (100.5,146.4) -- cycle ;
\draw [color={rgb, 255:red, 0; green, 0; blue, 0 } ,draw opacity=1 ][fill={rgb, 255:red, 0; green, 0; blue, 0 } ,fill opacity=1 ] (151.5,145.4) .. controls (151.5,142.97) and (153.51,141) .. (156,141) .. controls (158.49,141) and (160.5,142.97) .. (160.5,145.4) .. controls (160.5,147.83) and (158.49,149.8) .. (156,149.8) .. controls (153.51,149.8) and (151.5,147.83) .. (151.5,145.4) -- cycle ;
\draw [color={rgb, 255:red, 0; green, 0; blue, 0 } ,draw opacity=1 ][fill={rgb, 255:red, 0; green, 0; blue, 0 } ,fill opacity=1 ] (168.5,83.4) .. controls (168.5,80.97) and (170.51,79) .. (173,79) .. controls (175.49,79) and (177.5,80.97) .. (177.5,83.4) .. controls (177.5,85.83) and (175.49,87.8) .. (173,87.8) .. controls (170.51,87.8) and (168.5,85.83) .. (168.5,83.4) -- cycle ;
\draw [color={rgb, 255:red, 0; green, 0; blue, 0 } ,draw opacity=1 ][fill={rgb, 255:red, 0; green, 0; blue, 0 } ,fill opacity=1 ] (134.5,92.4) .. controls (134.5,89.97) and (136.51,88) .. (139,88) .. controls (141.49,88) and (143.5,89.97) .. (143.5,92.4) .. controls (143.5,94.83) and (141.49,96.8) .. (139,96.8) .. controls (136.51,96.8) and (134.5,94.83) .. (134.5,92.4) -- cycle ;
\draw [line width=1.5]  (76,94.4) -- (105,115.4) ;
\draw [line width=1.5]  (112,82.4) -- (105,115.4) ;
\draw [line width=1.5]  (105,115.4) -- (105,146.4) ;
\draw [line width=1.5]  (105,115.4) -- (156,115.4) ;
\draw [line width=1.5]  (156,115.4) -- (204,115.4) ;
\draw [line width=1.5]  (139,92.4) -- (156,115.4) ;
\draw [line width=1.5]  (173,83.4) -- (156,115.4) ;
\draw [line width=1.5]  (156,115.4) -- (156,145.4) ;
\draw [line width=1.5]  (128.5,66.8) -- (112,82.4) ;
\draw [line width=1.5]  (112,82.4) -- (99.5,64.8) ;
\draw [line width=1.5]  (76,94.4) -- (60.5,102.8) ;
\draw [line width=1.5]  (76,94.4) -- (72.5,75.8) ;
\draw [line width=1.5]  (105,146.4) -- (116.5,160.8) ;
\draw [line width=1.5]  (105,146.4) -- (93.5,161.8) ;
\draw [color={rgb, 255:red, 0; green, 0; blue, 0 } ,draw opacity=1 ][fill={rgb, 255:red, 0; green, 0; blue, 0 } ,fill opacity=1 ] (112,160.8) .. controls (112,158.37) and (114.01,156.4) .. (116.5,156.4) .. controls (118.99,156.4) and (121,158.37) .. (121,160.8) .. controls (121,163.24) and (118.99,165.21) .. (116.5,165.21) .. controls (114.01,165.21) and (112,163.24) .. (112,160.8) -- cycle ;
\draw [color={rgb, 255:red, 0; green, 0; blue, 0 } ,draw opacity=1 ][fill={rgb, 255:red, 0; green, 0; blue, 0 } ,fill opacity=1 ] (89,161.8) .. controls (89,159.37) and (91.01,157.4) .. (93.5,157.4) .. controls (95.99,157.4) and (98,159.37) .. (98,161.8) .. controls (98,164.24) and (95.99,166.21) .. (93.5,166.21) .. controls (91.01,166.21) and (89,164.24) .. (89,161.8) -- cycle ;
\draw [color={rgb, 255:red, 0; green, 0; blue, 0 } ,draw opacity=1 ][fill={rgb, 255:red, 0; green, 0; blue, 0 } ,fill opacity=1 ] (54,104.21) .. controls (54,101.78) and (56.01,99.8) .. (58.5,99.8) .. controls (60.99,99.8) and (63,101.78) .. (63,104.21) .. controls (63,106.64) and (60.99,108.61) .. (58.5,108.61) .. controls (56.01,108.61) and (54,106.64) .. (54,104.21) -- cycle ;
\draw [color={rgb, 255:red, 0; green, 0; blue, 0 } ,draw opacity=1 ][fill={rgb, 255:red, 0; green, 0; blue, 0 } ,fill opacity=1 ] (67,72.4) .. controls (67,69.97) and (69.01,68) .. (71.5,68) .. controls (73.99,68) and (76,69.97) .. (76,72.4) .. controls (76,74.83) and (73.99,76.8) .. (71.5,76.8) .. controls (69.01,76.8) and (67,74.83) .. (67,72.4) -- cycle ;
\draw [color={rgb, 255:red, 0; green, 0; blue, 0 } ,draw opacity=1 ][fill={rgb, 255:red, 0; green, 0; blue, 0 } ,fill opacity=1 ] (93,62.4) .. controls (93,59.97) and (95.01,58) .. (97.5,58) .. controls (99.99,58) and (102,59.97) .. (102,62.4) .. controls (102,64.83) and (99.99,66.8) .. (97.5,66.8) .. controls (95.01,66.8) and (93,64.83) .. (93,62.4) -- cycle ;
\draw [color={rgb, 255:red, 0; green, 0; blue, 0 } ,draw opacity=1 ][fill={rgb, 255:red, 0; green, 0; blue, 0 } ,fill opacity=1 ] (127.5,64.8) .. controls (127.5,62.37) and (129.51,60.4) .. (132,60.4) .. controls (134.49,60.4) and (136.5,62.37) .. (136.5,64.8) .. controls (136.5,67.24) and (134.49,69.21) .. (132,69.21) .. controls (129.51,69.21) and (127.5,67.24) .. (127.5,64.8) -- cycle ;
\draw [color={rgb, 255:red, 208; green, 2; blue, 27 } ,draw opacity=1 ]  (208.5,115.4) -- (217.5,114.8) ;
\draw [color={rgb, 255:red, 208; green, 2; blue, 27 } ,draw opacity=1 ]  (90.5,114.8) -- (100.5,115.4) ;
\draw  (204,115.4) -- (203.5,127.8) ;
\draw  (204,115.4) -- (211.5,105.8) ;
\draw  (204,115.4) -- (194.5,105.8) ;
\draw  (156,145.4) -- (165.5,154.8) ;
\draw  (156,145.4) -- (147.5,153.8) ;
\draw  (173,83.4) -- (187.5,83.8) ;
\draw  (173,83.4) -- (171.5,69.8) ;
\draw  (139,92.4) -- (143.5,81.8) ;
\draw  (139,92.4) -- (128.5,90.8) ;
\draw  (132,64.8) -- (145.5,65.8) ;
\draw  (132,64.8) -- (132.5,52.8) ;
\draw  (97.5,62.4) -- (103.5,51.8) ;
\draw  (97.5,62.4) -- (83.5,62.8) ;
\draw  (116.5,160.8) -- (129.5,164.8) ;
\draw  (116.5,160.8) -- (113.5,172.8) ;
\draw  (93.5,161.8) -- (100.5,171.8) ;
\draw  (93.5,161.8) -- (83.5,168.8) ;
\draw  (58.5,104.21) -- (50.5,111.8) ;
\draw  (58.5,104.21) -- (54.5,93.8) ;
\draw  (71.5,72.4) -- (57.5,68.8) ;
\draw  (71.5,72.4) -- (76.5,61.8) ;

\draw (218,108.4) node [anchor=north west][inner sep=0.75pt] [font=\scriptsize,color={rgb, 255:red, 208; green, 2; blue, 27 } ,opacity=1 ] {$x^{-}$};
\draw (76,108.4) node [anchor=north west][inner sep=0.75pt] [font=\scriptsize,color={rgb, 255:red, 208; green, 2; blue, 27 } ,opacity=1 ] {$x^{+}$};
\end{tikzpicture}
\caption{A heavy $(r,n)$-curve with $r=3$ and $n=7$. The dual graph of the curve is shown, meaning that vertices represent components, edges represent nodes, and half-edges represent marked points. The automorphism $\sigma$ can be viewed as a rotation about the horizontal axis connecting the component with $x^+$ to the component with $x^-$.}
\label{fig:rncurve}
\end{figure}

Families of such objects are defined as follows.

\begin{definition}\label{def:family_rn_curve}
  A \emphbf{family of heavy $(r,n)$-curves over a base scheme $B$} is a flat morphism $\pi:\mcC \to B$ of schemes with $rn+2$ sections $z_i^j,x^+,x^-$ along with an order-$r$ automorphism $\sigma:\mcC \to \mcC$ satisfying $\pi \circ \sigma = \pi$ and such that the tuple
    \[
    \left(\pi^{-1}(b); \set{z_i^j(b)}, \; x^{\pm}(b), \; \sigma\vert_{\pi^{-1}(b)}\right)
    \]
    is a heavy $(r,n)$-curve for every $b\in B$. An isomorphism of families $\pi:\mcC\rightarrow B$ and $\pi':\mcC'\rightarrow B$ is an isomorphism $f:\mcC\rightarrow\mcC'$ over $B$ that commutes with the sections and the order-$r$ automorphisms.
\end{definition}

It is straightforward to see that families of heavy $(r,n)$-curves can be pulled back along morphisms $B' \rightarrow B$, so the above defines a moduli problem. It remains to show that the fiber product $\Mbar^r_n$ is a fine moduli space for this moduli problem. We accomplish this in several steps, first showing that the points of $\Mbar_n^r$ correspond to heavy $(r,n)$-curves, then constructing a tautological family over $\Mbar_n^r$, and finally by showing that this tautological family is universal.

\subsection{Points of $\Mbar_n^r$ parametrize heavy $(r,n)$-curves}

That points of $\Mbar_n^r$ parametrize heavy $(r,n)$-curves makes sense heuristically: the data of a heavy $(r,n)$-curve is dictated by the choice of the quotient $C/\sigma$ (which is an element of $\Mbar_{0,n+2}$) together with the choice, for each $i \in [n]$, of which of the points $z_i^j$ is $z_i^0$ (which is given by an element of $\Lbar_n$ whose image under the $r$th power map $q$ is the contraction of $C/\sigma$ to its central chain). See Figure~\ref{fig:fiberproduct}. The two lemmas of this subsection make this heuristic more precise.

\tikzset{every picture/.style={line width=0.75pt}} 

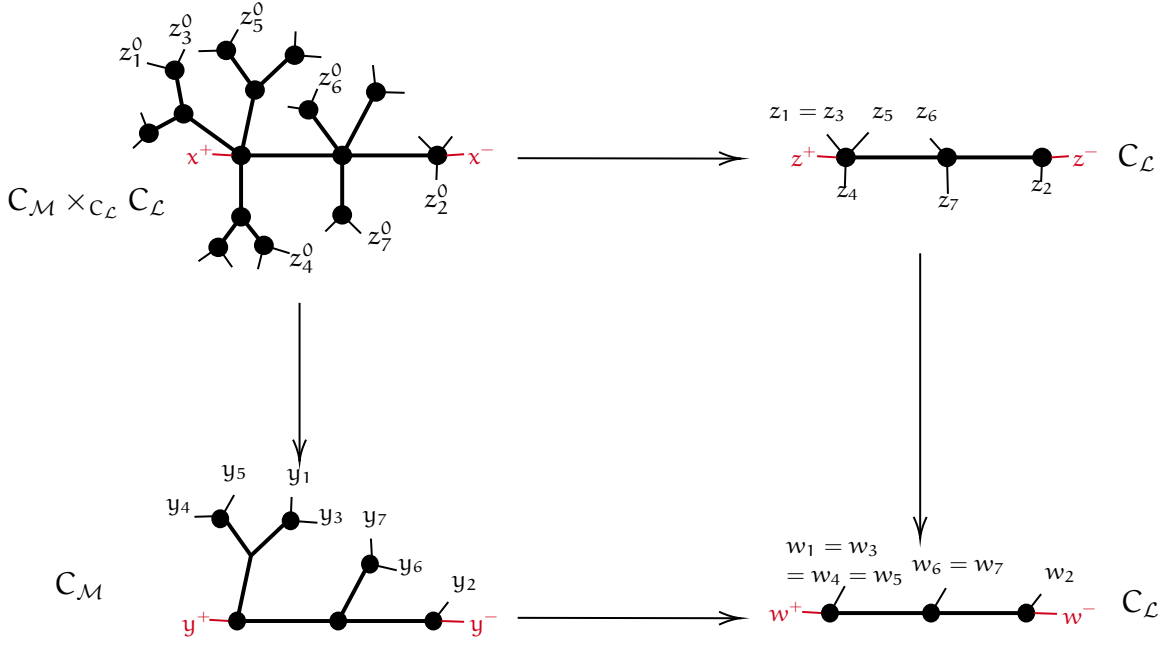
\begin{figure}[ht]
\begin{tikzpicture}[x=0.75pt,y=0.75pt,yscale=-1,xscale=1]

\draw [color={rgb, 255:red, 0; green, 0; blue, 0 } ,draw opacity=1 ][fill={rgb, 255:red, 0; green, 0; blue, 0 } ,fill opacity=1 ] (100.5,115.4) .. controls (100.5,112.97) and (102.51,111) .. (105,111) .. controls (107.49,111) and (109.5,112.97) .. (109.5,115.4) .. controls (109.5,117.83) and (107.49,119.8) .. (105,119.8) .. controls (102.51,119.8) and (100.5,117.83) .. (100.5,115.4) -- cycle ;
\draw [color={rgb, 255:red, 0; green, 0; blue, 0 } ,draw opacity=1 ][fill={rgb, 255:red, 0; green, 0; blue, 0 } ,fill opacity=1 ] (151.5,115.4) .. controls (151.5,112.97) and (153.51,111) .. (156,111) .. controls (158.49,111) and (160.5,112.97) .. (160.5,115.4) .. controls (160.5,117.83) and (158.49,119.8) .. (156,119.8) .. controls (153.51,119.8) and (151.5,117.83) .. (151.5,115.4) -- cycle ;
\draw [color={rgb, 255:red, 0; green, 0; blue, 0 } ,draw opacity=1 ][fill={rgb, 255:red, 0; green, 0; blue, 0 } ,fill opacity=1 ] (199.5,115.4) .. controls (199.5,112.97) and (201.51,111) .. (204,111) .. controls (206.49,111) and (208.5,112.97) .. (208.5,115.4) .. controls (208.5,117.83) and (206.49,119.8) .. (204,119.8) .. controls (201.51,119.8) and (199.5,117.83) .. (199.5,115.4) -- cycle ;
\draw [color={rgb, 255:red, 0; green, 0; blue, 0 } ,draw opacity=1 ][fill={rgb, 255:red, 0; green, 0; blue, 0 } ,fill opacity=1 ] (107.5,82.4) .. controls (107.5,79.97) and (109.51,78) .. (112,78) .. controls (114.49,78) and (116.5,79.97) .. (116.5,82.4) .. controls (116.5,84.83) and (114.49,86.8) .. (112,86.8) .. controls (109.51,86.8) and (107.5,84.83) .. (107.5,82.4) -- cycle ;
\draw [color={rgb, 255:red, 0; green, 0; blue, 0 } ,draw opacity=1 ][fill={rgb, 255:red, 0; green, 0; blue, 0 } ,fill opacity=1 ] (71.5,94.4) .. controls (71.5,91.97) and (73.51,90) .. (76,90) .. controls (78.49,90) and (80.5,91.97) .. (80.5,94.4) .. controls (80.5,96.83) and (78.49,98.8) .. (76,98.8) .. controls (73.51,98.8) and (71.5,96.83) .. (71.5,94.4) -- cycle ;
\draw [color={rgb, 255:red, 0; green, 0; blue, 0 } ,draw opacity=1 ][fill={rgb, 255:red, 0; green, 0; blue, 0 } ,fill opacity=1 ] (100.5,146.4) .. controls (100.5,143.97) and (102.51,142) .. (105,142) .. controls (107.49,142) and (109.5,143.97) .. (109.5,146.4) .. controls (109.5,148.83) and (107.49,150.8) .. (105,150.8) .. controls (102.51,150.8) and (100.5,148.83) .. (100.5,146.4) -- cycle ;
\draw [color={rgb, 255:red, 0; green, 0; blue, 0 } ,draw opacity=1 ][fill={rgb, 255:red, 0; green, 0; blue, 0 } ,fill opacity=1 ] (151.5,145.4) .. controls (151.5,142.97) and (153.51,141) .. (156,141) .. controls (158.49,141) and (160.5,142.97) .. (160.5,145.4) .. controls (160.5,147.83) and (158.49,149.8) .. (156,149.8) .. controls (153.51,149.8) and (151.5,147.83) .. (151.5,145.4) -- cycle ;
\draw [color={rgb, 255:red, 0; green, 0; blue, 0 } ,draw opacity=1 ][fill={rgb, 255:red, 0; green, 0; blue, 0 } ,fill opacity=1 ] (168.5,83.4) .. controls (168.5,80.97) and (170.51,79) .. (173,79) .. controls (175.49,79) and (177.5,80.97) .. (177.5,83.4) .. controls (177.5,85.83) and (175.49,87.8) .. (173,87.8) .. controls (170.51,87.8) and (168.5,85.83) .. (168.5,83.4) -- cycle ;
\draw [color={rgb, 255:red, 0; green, 0; blue, 0 } ,draw opacity=1 ][fill={rgb, 255:red, 0; green, 0; blue, 0 } ,fill opacity=1 ] (134.5,92.4) .. controls (134.5,89.97) and (136.51,88) .. (139,88) .. controls (141.49,88) and (143.5,89.97) .. (143.5,92.4) .. controls (143.5,94.83) and (141.49,96.8) .. (139,96.8) .. controls (136.51,96.8) and (134.5,94.83) .. (134.5,92.4) -- cycle ;
\draw [line width=1.5]  (76,94.4) -- (105,115.4) ;
\draw [line width=1.5]  (112,82.4) -- (105,115.4) ;
\draw [line width=1.5]  (105,115.4) -- (105,146.4) ;
\draw [line width=1.5]  (105,115.4) -- (156,115.4) ;
\draw [line width=1.5]  (156,115.4) -- (204,115.4) ;
\draw [line width=1.5]  (139,92.4) -- (156,115.4) ;
\draw [line width=1.5]  (173,83.4) -- (156,115.4) ;
\draw [line width=1.5]  (156,115.4) -- (156,145.4) ;
\draw [line width=1.5]  (128.5,66.8) -- (112,82.4) ;
\draw [line width=1.5]  (112,82.4) -- (99.5,64.8) ;
\draw [line width=1.5]  (76,94.4) -- (60.5,102.8) ;
\draw [line width=1.5]  (76,94.4) -- (72.5,75.8) ;
\draw [line width=1.5]  (105,146.4) -- (116.5,160.8) ;
\draw [line width=1.5]  (105,146.4) -- (93.5,161.8) ;
\draw [color={rgb, 255:red, 0; green, 0; blue, 0 } ,draw opacity=1 ][fill={rgb, 255:red, 0; green, 0; blue, 0 } ,fill opacity=1 ] (112,160.8) .. controls (112,158.37) and (114.01,156.4) .. (116.5,156.4) .. controls (118.99,156.4) and (121,158.37) .. (121,160.8) .. controls (121,163.24) and (118.99,165.21) .. (116.5,165.21) .. controls (114.01,165.21) and (112,163.24) .. (112,160.8) -- cycle ;
\draw [color={rgb, 255:red, 0; green, 0; blue, 0 } ,draw opacity=1 ][fill={rgb, 255:red, 0; green, 0; blue, 0 } ,fill opacity=1 ] (89,161.8) .. controls (89,159.37) and (91.01,157.4) .. (93.5,157.4) .. controls (95.99,157.4) and (98,159.37) .. (98,161.8) .. controls (98,164.24) and (95.99,166.21) .. (93.5,166.21) .. controls (91.01,166.21) and (89,164.24) .. (89,161.8) -- cycle ;
\draw [color={rgb, 255:red, 0; green, 0; blue, 0 } ,draw opacity=1 ][fill={rgb, 255:red, 0; green, 0; blue, 0 } ,fill opacity=1 ] (54,104.21) .. controls (54,101.78) and (56.01,99.8) .. (58.5,99.8) .. controls (60.99,99.8) and (63,101.78) .. (63,104.21) .. controls (63,106.64) and (60.99,108.61) .. (58.5,108.61) .. controls (56.01,108.61) and (54,106.64) .. (54,104.21) -- cycle ;
\draw [color={rgb, 255:red, 0; green, 0; blue, 0 } ,draw opacity=1 ][fill={rgb, 255:red, 0; green, 0; blue, 0 } ,fill opacity=1 ] (67,72.4) .. controls (67,69.97) and (69.01,68) .. (71.5,68) .. controls (73.99,68) and (76,69.97) .. (76,72.4) .. controls (76,74.83) and (73.99,76.8) .. (71.5,76.8) .. controls (69.01,76.8) and (67,74.83) .. (67,72.4) -- cycle ;
\draw [color={rgb, 255:red, 0; green, 0; blue, 0 } ,draw opacity=1 ][fill={rgb, 255:red, 0; green, 0; blue, 0 } ,fill opacity=1 ] (93,62.4) .. controls (93,59.97) and (95.01,58) .. (97.5,58) .. controls (99.99,58) and (102,59.97) .. (102,62.4) .. controls (102,64.83) and (99.99,66.8) .. (97.5,66.8) .. controls (95.01,66.8) and (93,64.83) .. (93,62.4) -- cycle ;
\draw [color={rgb, 255:red, 0; green, 0; blue, 0 } ,draw opacity=1 ][fill={rgb, 255:red, 0; green, 0; blue, 0 } ,fill opacity=1 ] (127.5,64.8) .. controls (127.5,62.37) and (129.51,60.4) .. (132,60.4) .. controls (134.49,60.4) and (136.5,62.37) .. (136.5,64.8) .. controls (136.5,67.24) and (134.49,69.21) .. (132,69.21) .. controls (129.51,69.21) and (127.5,67.24) .. (127.5,64.8) -- cycle ;
\draw [color={rgb, 255:red, 208; green, 2; blue, 27 } ,draw opacity=1 ]  (208.5,115.4) -- (217.5,114.8) ;
\draw [color={rgb, 255:red, 208; green, 2; blue, 27 } ,draw opacity=1 ]  (90.5,114.8) -- (100.5,115.4) ;
\draw  (204,115.4) -- (203.5,127.8) ;
\draw  (204,115.4) -- (211.5,105.8) ;
\draw  (204,115.4) -- (194.5,105.8) ;
\draw  (156,145.4) -- (165.5,154.8) ;
\draw  (156,145.4) -- (147.5,153.8) ;
\draw  (173,83.4) -- (187.5,83.8) ;
\draw  (173,83.4) -- (171.5,69.8) ;
\draw  (139,92.4) -- (143.5,81.8) ;
\draw  (139,92.4) -- (128.5,90.8) ;
\draw  (132,64.8) -- (145.5,65.8) ;
\draw  (132,64.8) -- (132.5,52.8) ;
\draw  (97.5,62.4) -- (103.5,51.8) ;
\draw  (97.5,62.4) -- (83.5,62.8) ;
\draw  (116.5,160.8) -- (129.5,164.8) ;
\draw  (116.5,160.8) -- (113.5,172.8) ;
\draw  (93.5,161.8) -- (100.5,171.8) ;
\draw  (93.5,161.8) -- (83.5,168.8) ;
\draw  (58.5,104.21) -- (50.5,111.8) ;
\draw  (58.5,104.21) -- (54.5,93.8) ;
\draw  (71.5,72.4) -- (57.5,68.8) ;
\draw  (71.5,72.4) -- (76.5,61.8) ;
\draw [line width=1.5]  (110,317.4) -- (103,350.4) ;
\draw [line width=1.5]  (103,350.4) -- (154,350.4) ;
\draw [line width=1.5]  (154,350.4) -- (202,350.4) ;
\draw [line width=1.5]  (171,318.4) -- (154,350.4) ;
\draw [line width=1.5]  (126.5,301.8) -- (110,317.4) ;
\draw [line width=1.5]  (110,317.4) -- (97.5,299.8) ;
\draw [color={rgb, 255:red, 208; green, 2; blue, 27 } ,draw opacity=1 ]  (89,349.8) -- (99,350.4) ;
\draw  (202,350.4) -- (209.5,340.8) ;
\draw  (171,318.4) -- (170.5,308.8) ;
\draw  (130,299.8) -- (143.5,300.8) ;
\draw  (130,299.8) -- (130.5,287.8) ;
\draw  (95.5,297.4) -- (101.5,286.8) ;
\draw  (95.5,297.4) -- (81.5,297.8) ;
\draw [color={rgb, 255:red, 0; green, 0; blue, 0 } ,draw opacity=1 ][fill={rgb, 255:red, 0; green, 0; blue, 0 } ,fill opacity=1 ] (99,350.4) .. controls (99,348.04) and (100.79,346.13) .. (103,346.13) .. controls (105.21,346.13) and (107,348.04) .. (107,350.4) .. controls (107,352.76) and (105.21,354.67) .. (103,354.67) .. controls (100.79,354.67) and (99,352.76) .. (99,350.4) -- cycle ;
\draw [color={rgb, 255:red, 0; green, 0; blue, 0 } ,draw opacity=1 ][fill={rgb, 255:red, 0; green, 0; blue, 0 } ,fill opacity=1 ] (90.5,298.8) .. controls (90.5,296.45) and (92.29,294.53) .. (94.5,294.53) .. controls (96.71,294.53) and (98.5,296.45) .. (98.5,298.8) .. controls (98.5,301.16) and (96.71,303.08) .. (94.5,303.08) .. controls (92.29,303.08) and (90.5,301.16) .. (90.5,298.8) -- cycle ;
\draw [color={rgb, 255:red, 0; green, 0; blue, 0 } ,draw opacity=1 ][fill={rgb, 255:red, 0; green, 0; blue, 0 } ,fill opacity=1 ] (126,299.8) .. controls (126,297.45) and (127.79,295.53) .. (130,295.53) .. controls (132.21,295.53) and (134,297.45) .. (134,299.8) .. controls (134,302.16) and (132.21,304.08) .. (130,304.08) .. controls (127.79,304.08) and (126,302.16) .. (126,299.8) -- cycle ;
\draw [color={rgb, 255:red, 0; green, 0; blue, 0 } ,draw opacity=1 ][fill={rgb, 255:red, 0; green, 0; blue, 0 } ,fill opacity=1 ] (150,350.4) .. controls (150,348.04) and (151.79,346.13) .. (154,346.13) .. controls (156.21,346.13) and (158,348.04) .. (158,350.4) .. controls (158,352.76) and (156.21,354.67) .. (154,354.67) .. controls (151.79,354.67) and (150,352.76) .. (150,350.4) -- cycle ;
\draw [color={rgb, 255:red, 0; green, 0; blue, 0 } ,draw opacity=1 ][fill={rgb, 255:red, 0; green, 0; blue, 0 } ,fill opacity=1 ] (166,321.67) .. controls (166,319.31) and (167.79,317.4) .. (170,317.4) .. controls (172.21,317.4) and (174,319.31) .. (174,321.67) .. controls (174,324.03) and (172.21,325.95) .. (170,325.95) .. controls (167.79,325.95) and (166,324.03) .. (166,321.67) -- cycle ;
\draw [color={rgb, 255:red, 0; green, 0; blue, 0 } ,draw opacity=1 ][fill={rgb, 255:red, 0; green, 0; blue, 0 } ,fill opacity=1 ] (198,350.4) .. controls (198,348.04) and (199.79,346.13) .. (202,346.13) .. controls (204.21,346.13) and (206,348.04) .. (206,350.4) .. controls (206,352.76) and (204.21,354.67) .. (202,354.67) .. controls (199.79,354.67) and (198,352.76) .. (198,350.4) -- cycle ;
\draw  (170,321.67) -- (183.5,324.8) ;
\draw [color={rgb, 255:red, 208; green, 2; blue, 27 } ,draw opacity=1 ]  (206,350.4) -- (217.5,350.8) ;
\draw [color={rgb, 255:red, 0; green, 0; blue, 0 } ,draw opacity=1 ][fill={rgb, 255:red, 0; green, 0; blue, 0 } ,fill opacity=1 ] (405.5,116.4) .. controls (405.5,113.97) and (407.51,112) .. (410,112) .. controls (412.49,112) and (414.5,113.97) .. (414.5,116.4) .. controls (414.5,118.83) and (412.49,120.8) .. (410,120.8) .. controls (407.51,120.8) and (405.5,118.83) .. (405.5,116.4) -- cycle ;
\draw [color={rgb, 255:red, 0; green, 0; blue, 0 } ,draw opacity=1 ][fill={rgb, 255:red, 0; green, 0; blue, 0 } ,fill opacity=1 ] (456.5,116.4) .. controls (456.5,113.97) and (458.51,112) .. (461,112) .. controls (463.49,112) and (465.5,113.97) .. (465.5,116.4) .. controls (465.5,118.83) and (463.49,120.8) .. (461,120.8) .. controls (458.51,120.8) and (456.5,118.83) .. (456.5,116.4) -- cycle ;
\draw [color={rgb, 255:red, 0; green, 0; blue, 0 } ,draw opacity=1 ][fill={rgb, 255:red, 0; green, 0; blue, 0 } ,fill opacity=1 ] (504.5,116.4) .. controls (504.5,113.97) and (506.51,112) .. (509,112) .. controls (511.49,112) and (513.5,113.97) .. (513.5,116.4) .. controls (513.5,118.83) and (511.49,120.8) .. (509,120.8) .. controls (506.51,120.8) and (504.5,118.83) .. (504.5,116.4) -- cycle ;
\draw [line width=1.5]  (410,116.4) -- (461,116.4) ;
\draw [line width=1.5]  (461,116.4) -- (509,116.4) ;
\draw [color={rgb, 255:red, 208; green, 2; blue, 27 } ,draw opacity=1 ]  (513.5,116.4) -- (522.5,115.8) ;
\draw [color={rgb, 255:red, 208; green, 2; blue, 27 } ,draw opacity=1 ]  (395.5,115.8) -- (405.5,116.4) ;
\draw  (509,116.4) -- (508.5,128.8) ;
\draw  (400.5,104.8) -- (410,116.4) ;
\draw  (421.5,103.8) -- (410,116.4) ;
\draw  (409.5,131.8) -- (410,116.4) ;
\draw  (452.5,106.8) -- (461,116.4) ;
\draw  (461.5,134.8) -- (461,116.4) ;
\draw [line width=1.5]  (402,346.4) -- (453,346.4) ;
\draw [line width=1.5]  (453,346.4) -- (501,346.4) ;
\draw [color={rgb, 255:red, 208; green, 2; blue, 27 } ,draw opacity=1 ]  (388,345.8) -- (398,346.4) ;
\draw  (501,346.4) -- (508.5,336.8) ;
\draw [color={rgb, 255:red, 0; green, 0; blue, 0 } ,draw opacity=1 ][fill={rgb, 255:red, 0; green, 0; blue, 0 } ,fill opacity=1 ] (398,346.4) .. controls (398,344.04) and (399.79,342.13) .. (402,342.13) .. controls (404.21,342.13) and (406,344.04) .. (406,346.4) .. controls (406,348.76) and (404.21,350.67) .. (402,350.67) .. controls (399.79,350.67) and (398,348.76) .. (398,346.4) -- cycle ;
\draw [color={rgb, 255:red, 0; green, 0; blue, 0 } ,draw opacity=1 ][fill={rgb, 255:red, 0; green, 0; blue, 0 } ,fill opacity=1 ] (449,346.4) .. controls (449,344.04) and (450.79,342.13) .. (453,342.13) .. controls (455.21,342.13) and (457,344.04) .. (457,346.4) .. controls (457,348.76) and (455.21,350.67) .. (453,350.67) .. controls (450.79,350.67) and (449,348.76) .. (449,346.4) -- cycle ;
\draw [color={rgb, 255:red, 0; green, 0; blue, 0 } ,draw opacity=1 ][fill={rgb, 255:red, 0; green, 0; blue, 0 } ,fill opacity=1 ] (497,346.4) .. controls (497,344.04) and (498.79,342.13) .. (501,342.13) .. controls (503.21,342.13) and (505,344.04) .. (505,346.4) .. controls (505,348.76) and (503.21,350.67) .. (501,350.67) .. controls (498.79,350.67) and (497,348.76) .. (497,346.4) -- cycle ;
\draw [color={rgb, 255:red, 208; green, 2; blue, 27 } ,draw opacity=1 ]  (505,346.4) -- (516.5,346.8) ;
\draw  (460.5,333.8) -- (453,346.4) ;
\draw  (409.5,332.8) -- (402,346.4) ;
\draw  (244.5,348.8) -- (355,349) ;
\draw [shift={(357,349)}, rotate = 180.1] [color={rgb, 255:red, 0; green, 0; blue, 0 } ][line width=0.75]  (10.93,-3.29) .. controls (6.95,-1.4) and (3.31,-0.3) .. (0,0) .. controls (3.31,0.3) and (6.95,1.4) .. (10.93,3.29)  ;
\draw  (244.5,116.8) -- (355,117) ;
\draw [shift={(357,117)}, rotate = 180.1] [color={rgb, 255:red, 0; green, 0; blue, 0 } ][line width=0.75]  (10.93,-3.29) .. controls (6.95,-1.4) and (3.31,-0.3) .. (0,0) .. controls (3.31,0.3) and (6.95,1.4) .. (10.93,3.29)  ;
\draw  (447.5,164.8) -- (447.5,306.8) ;
\draw [shift={(447.5,308.8)}, rotate = 270] [color={rgb, 255:red, 0; green, 0; blue, 0 } ][line width=0.75]  (10.93,-3.29) .. controls (6.95,-1.4) and (3.31,-0.3) .. (0,0) .. controls (3.31,0.3) and (6.95,1.4) .. (10.93,3.29)  ;
\draw  (134.5,189.8) -- (134.5,267.8) ;
\draw [shift={(134.5,269.8)}, rotate = 270] [color={rgb, 255:red, 0; green, 0; blue, 0 } ][line width=0.75]  (10.93,-3.29) .. controls (6.95,-1.4) and (3.31,-0.3) .. (0,0) .. controls (3.31,0.3) and (6.95,1.4) .. (10.93,3.29)  ;

\draw (218,108.4) node [anchor=north west][inner sep=0.75pt] [font=\scriptsize,color={rgb, 255:red, 208; green, 2; blue, 27 } ,opacity=1 ] {$x^{-}$};
\draw (76,108.4) node [anchor=north west][inner sep=0.75pt] [font=\scriptsize,color={rgb, 255:red, 208; green, 2; blue, 27 } ,opacity=1 ] {$x^{+}$};
\draw (74,344.4) node [anchor=north west][inner sep=0.75pt] [font=\scriptsize,color={rgb, 255:red, 208; green, 2; blue, 27 } ,opacity=1 ] {$y^{+}$};
\draw (219,344.4) node [anchor=north west][inner sep=0.75pt] [font=\scriptsize,color={rgb, 255:red, 208; green, 2; blue, 27 } ,opacity=1 ] {$y^{-}$};
\draw (523,109.4) node [anchor=north west][inner sep=0.75pt] [font=\scriptsize,color={rgb, 255:red, 208; green, 2; blue, 27 } ,opacity=1 ] {$z^{-}$};
\draw (381,109.4) node [anchor=north west][inner sep=0.75pt] [font=\scriptsize,color={rgb, 255:red, 208; green, 2; blue, 27 } ,opacity=1 ] {$z^{+}$};
\draw (196,128.4) node [anchor=north west][inner sep=0.75pt] [font=\footnotesize] {$z_{2}^{0}$};
\draw (42,54.4) node [anchor=north west][inner sep=0.75pt] [font=\footnotesize] {$z_{1}^{0}$};
\draw (502,126.4) node [anchor=north west][inner sep=0.75pt] [font=\scriptsize] {$z_{2}$};
\draw (370,89.4) node [anchor=north west][inner sep=0.75pt] [font=\scriptsize] {$z_{1} =z_{3}$};
\draw (67,42.4) node [anchor=north west][inner sep=0.75pt] [font=\footnotesize] {$z_{3}^{0}$};
\draw (103,36.4) node [anchor=north west][inner sep=0.75pt] [font=\footnotesize] {$z_{5}^{0}$};
\draw (422,89.4) node [anchor=north west][inner sep=0.75pt] [font=\scriptsize] {$z_{5}$};
\draw (128,158.4) node [anchor=north west][inner sep=0.75pt] [font=\footnotesize] {$z_{4}^{0}$};
\draw (404,129.4) node [anchor=north west][inner sep=0.75pt] [font=\scriptsize] {$z_{4}$};
\draw (444,89.4) node [anchor=north west][inner sep=0.75pt] [font=\scriptsize] {$z_{6}$};
\draw (455,134.4) node [anchor=north west][inner sep=0.75pt] [font=\scriptsize] {$z_{7}$};
\draw (210,326.4) node [anchor=north west][inner sep=0.75pt] [font=\scriptsize] {$y_{2}$};
\draw (142.5,67.2) node [anchor=north west][inner sep=0.75pt] [font=\footnotesize] {$z_{6}^{0}$};
\draw (167.5,147.2) node [anchor=north west][inner sep=0.75pt] [font=\footnotesize] {$z_{7}^{0}$};
\draw (183,317.4) node [anchor=north west][inner sep=0.75pt] [font=\scriptsize] {$y_{6}$};
\draw (165,293.4) node [anchor=north west][inner sep=0.75pt] [font=\scriptsize] {$y_{7}$};
\draw (66,287.4) node [anchor=north west][inner sep=0.75pt] [font=\scriptsize] {$y_{4}$};
\draw (94,270.4) node [anchor=north west][inner sep=0.75pt] [font=\scriptsize] {$y_{5}$};
\draw (127,272.4) node [anchor=north west][inner sep=0.75pt] [font=\scriptsize] {$y_{1}$};
\draw (143,292.4) node [anchor=north west][inner sep=0.75pt] [font=\scriptsize] {$y_{3}$};
\draw (370,339) node [anchor=north west][inner sep=0.75pt] [font=\scriptsize,color={rgb, 255:red, 208; green, 2; blue, 27 } ,opacity=1 ] {$w^{+}$};
\draw (518,340.4) node [anchor=north west][inner sep=0.75pt] [font=\scriptsize,color={rgb, 255:red, 208; green, 2; blue, 27 } ,opacity=1 ] {$w^{-}$};
\draw (509,322.4) node [anchor=north west][inner sep=0.75pt] [font=\scriptsize] {$w_{2}$};
\draw (442,318.4) node [anchor=north west][inner sep=0.75pt] [font=\scriptsize] {$w_{6} =w_{7}$};
\draw (372,304.4) node [anchor=north west][inner sep=0.75pt] [font=\scriptsize] {$ \begin{array}{l}
w_{1} =w_{3}\\
=w_{4} =w_{5}
\end{array}$};
\draw (9,324.4) node [anchor=north west][inner sep=0.75pt]  {$C_\M$};
\draw (-15,130.4) node [anchor=north west][inner sep=0.75pt]  {$C_\M\times_{C_\mcL} C_\mcL$};
\draw (547,333.4) node [anchor=north west][inner sep=0.75pt]  {$C_\mcL$};
\draw (545,108.4) node [anchor=north west][inner sep=0.75pt]  {$C_\mcL$};
\end{tikzpicture}
\caption{An illustration of the interpretation of $\Mbar^r_n$ as a fiber product. Note that we have not labeled the points $z_i^j$ for $j\neq 0$ in the upper-left diagram, but those labels are uniquely determined by $z_i^0$ and $\sigma$.}
\label{fig:fiberproduct}
\end{figure}

Since $\Mbar_n^r=\Mbar_{0,n+2}\times_{\Lbar_n}\Lbar_n$, points of $\Mbar_n^r$ correspond to pairs $(\mu,\nu)\in\Mbar_{0,n+2}\times\Lbar_n$ such that $c(\mu)=q(\nu)$. The next lemma describes explicitly how every such point can be viewed as a heavy $(r,n)$-curve.

\begin{lemma}\label{lem:mod1}
Let $\mu=(C_\M;\{y_i\},y^\pm)\in\Mbar_{0,n+2}$ and $\nu=(C_\mcL;\{z_i\},z^\pm)\in\Lbar_n$ be such that $c(\mu)=q(\nu)$. Let $\tilde c:C_\mcM\rightarrow C_\mcL$ and $\tilde q:C_\mcL\rightarrow C_\mcL$ be the maps on curves induced by $c$ and $q$. Define $\zeta:C_\mcL\rightarrow C_\mcL$ to be multiplication by $\zeta$ on each component of $C_\mcL$. Then
\[
(C_\mcM\times_{C_\mcL}C_\mcL;\;\{(y_i,\zeta^jz_i)\},\;(y^\pm,z^\pm),\;\sigma)
\]
is a heavy $(r,n)$-curve, where $\sigma(y,z)=(y,\zeta z)$.
\end{lemma}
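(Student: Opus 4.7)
The strategy is to analyze the fiber product $C_\mcM\times_{C_\mcL}C_\mcL$ componentwise and then verify each defining condition of Definition~\ref{def:rn_curve}. Two observations underlie the analysis: on each component of $C_\mcL$, the map $\tilde q$ is the $r$-th power $[x:y]\mapsto [x^r:y^r]$, which is branched only at the two $\sigma$-fixed points $0$ and $\infty$; and $\tilde c$ contracts precisely the non-spine (tree) components of $C_\mcM$, since every spine component of $C_\mcM$ is automatically Losev--Manin stable. In particular, each contracted tree is attached to the spine at a smooth interior point, so its image under $\tilde c$ in $C_\mcL$ avoids the branch locus of $\tilde q$. The main obstacle is the stability check for the $(rn+2)$-pointed curve in the fiber product.

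First, I would unpack the local structure of the fiber product. For each component $P$ of the target $C_\mcL$, there is a unique spine component $P_\mcM\subseteq C_\mcM$ mapping isomorphically to $P$ via $\tilde c$; over $P_\mcM$, the fiber product is the source copy of $P$ in $C_\mcL$, mapping $r$-to-$1$ onto $P_\mcM$ with branching at the two endpoints. For each tree $T\subseteq C_\mcM$ contracted by $\tilde c$ to a point $p\in C_\mcL$, the fact that $p$ avoids the branch locus of $\tilde q$ implies that the fiber product over $T$ is $T\times\tilde q^{-1}(p)$, consisting of $r$ isomorphic copies of $T$. Gluing these pieces along preimages of the original nodes preserves the tree-like dual graph, so the fiber product is a nodal curve of arithmetic genus zero whose dual graph consists of a central chain of $\P^1$'s (one per component of $C_\mcL$) to which the trees of $C_\mcM$ are attached with $r$-fold symmetry.

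Next, I would verify the combinatorial data of the marked points and the automorphism. The pair $(y_i,\zeta^jz_i)$ lies in the fiber product because $\tilde q(\zeta^jz_i)=\tilde q(z_i)=\tilde c(y_i)$, where the second equality uses $c(\mu)=q(\nu)$; similarly $(y^\pm,z^\pm)$ lies in the fiber product, since $z^\pm$ are fixed by $\tilde q$ and $\tilde c(y^\pm)=z^\pm$. Distinctness in the first coordinate uses that $(C_\mcM;\{y_i\},y^\pm)$ has pairwise distinct marked points, while distinctness in the second coordinate for varying $j$ uses that light marked points $z_i$ on $C_\mcL$ avoid $\{0,\infty\}$ on their components, so that $\{\zeta^jz_i\}_{j\in\Z_r}$ consists of $r$ distinct points. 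The automorphism $\sigma(y,z)=(y,\zeta z)$ clearly has order $r$, fixes $(y^\pm,z^\pm)$, cyclically permutes the $z_i^j$, and has finite fixed locus (consisting only of pairs $(y,z)$ with $z\in\{0,\infty\}$ on some component of $C_\mcL$).

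Finally, I would verify stability of the $(rn+2)$-pointed curve in $\Mbar_{0,rn+2}$, together with the coordinate description of $\sigma$. For each central component of the fiber product, Losev--Manin stability of $C_\mcL$ provides at least one light marked point on the corresponding component of $C_\mcL$, which pulls back to at least $r\geq 2$ light marked points on the central component, giving $\geq r+2\geq 4$ special points together with the two endpoint $\sigma$-fixed points. Components of the fiber product coming from trees of $C_\mcM$ are isomorphic to the corresponding components of $C_\mcM$, and so inherit stability directly. Lastly, the condition that on each central component $\sigma$ acts as multiplication by $\zeta$ in coordinates with $p^+=\infty$ and $p^-=0$ follows tautologically, since each central component is identified with a component of $C_\mcL$ via the second-factor projection, on which $\sigma$ is by construction multiplication by $\zeta$ and the $\sigma$-fixed points are $0$ and $\infty$.
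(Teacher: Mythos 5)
Your proposal is correct and takes essentially the same approach as the paper: both identify $C_\mcM\times_{C_\mcL}C_\mcL$ as the central chain $C_\mcL$ with $r$ symmetric copies of each $\tilde c$-contracted tree attached at the $r$ preimages under $\tilde q$ of its image point, and then verify the conditions of Definition~\ref{def:rn_curve}. The only differences are cosmetic: the paper obtains the structural description by recognizing $C\rightarrow C_\mcM$ as the unique degree-$r$ cyclic cover fully ramified over $y^\pm$ and the chain nodes, while you compute the fiber product componentwise and make the stability count explicit (note only that the $r$ preimages of a light point $z_i$ are nodes rather than marked points when $y_i$ lies on a contracted tree, though the special-point count is unaffected).
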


\begin{proof}
For convenience, set $C=C_\mcM\times_{C_\mcL} C_\mcL$.  Since $\tilde q:C_\mcL\rightarrow C_\mcL$ is a degree-$r$ cyclic cover of $C_\mcL$ that is fully ramified at the nodes and heavy marked points of $C_\mcL$, it follows that $C\rightarrow C_\mcM$ is a degree-$r$ cyclic cover of $C_\mcM$ that is fully ramified over $y^\pm$, as well as over all nodes of $C_\mcM$ that connect components in the chain from $y^-$ to $y^+$. Up to isomorphism, there is a unique such cyclic cover $C\rightarrow C_\mcM$. More specifically, $C$ is a nodal rational curve containing a central chain that we identify with $C_\mcL$, and for each tree $T\subseteq C_\mcM$ that is contracted by $\tilde c$, the curve $C$ contains $r$ identical such trees attached at simple nodes at the $r$ points of $\tilde q^{-1}(\tilde c(T))\subseteq C_\mcL$.

Given this description of $C$, one sees that it has an order-$r$ automorphism acting by multiplication by $\zeta$ on the central chain $C_\mcL$ and permuting the trees accordingly. The fiber product map $\tilde f_{\mcM}:C\rightarrow C_\mcM$ is the quotient by this automorphism, while the map $\tilde f_{\mcL}:C\rightarrow C_\mcL$ contracts the trees to the central chain, so this automorphism is the $\sigma$ in the statement of the lemma.

Set $x^{\pm}$ to be the endpoints of the central chain of $C$, corresponding to $z^\pm\in C_\mcL$.  Furthermore, for  each $i \in [n]$ and $j \in \Z_r$, set $z_i^j = \sigma^j(z_i^0)$, where $z_i^0$ is the unique element of the preimage $\tilde f_\mcM^{-1}(y_i)$ whose image under $\tilde f_\mcL$ is $z_i$. This data makes $C$ into a heavy $(r,n)$-curve, and because $(\tilde f_\mcM(z_i^j),\tilde f_\mcL(z_i^j))=(y_i,\zeta^jz_i)$ and $(\tilde f_\mcM(x^\pm),\tilde f_\mcL(x^\pm))=(y^\pm,z^\pm)$, it agrees with the data in the statement of the lemma.
\end{proof}

The previous lemma can be viewed as a map from $\Mbar^r_n$ to the set of heavy $(r,n)$-curves.  The next lemma, on the other hand, gives an explicit procedure for going in the other direction.

\begin{lemma}\label{lem:mod2}
Let $(C;\{z_i^j\},x^{\pm},\sigma)$ be a heavy $(r,n)$-curve.
\begin{itemize}
    \item Define $(C_\mcM;\{y_i\},y^{\pm})\in\Mbar_{0,n+2}$ by setting $C_{\mcM} = C/\sigma$, setting $y_i$ to be the image of the orbit $\{z_i^0,\dots,z_i^{r-1}\}$, and setting $y^\pm$ to be the image of $x^\pm$.
    \item Define $(C_\mcL;\{z_i\},z^\pm)\in \Lbar_n$ to be the image of $(C;\{z_i^j\},x^{\pm})\in\Mbar_{0,rn+2}$ under the forgetful and weight-reduction map $\Mbar_{0,rn+2}\rightarrow\Lbar_n$ that forgets all but the marked points $\{z_i^0\}$.
\end{itemize}
Then $C=C_\mcM\times_{C_\mcL}C_\mcL$, and under this identification of curves, $z_i^j=(y_i,\zeta^jz_i)$, $x^\pm=(y^\pm,z^\pm)$, and $\sigma(y,z)=(y,\zeta z)$. 
\end{lemma}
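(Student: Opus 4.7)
The plan is to produce the data $(\mu, \nu) \in \Mbar_{0,n+2} \times \Lbar_n$ as specified, verify the compatibility $c(\mu) = q(\nu)$, and then identify $C$ with the fiber product that Lemma~\ref{lem:mod1} assembles from $(\mu, \nu)$.

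First I would check that $\mu = (C_\mcM; \{y_i\}, y^\pm)$ and $\nu = (C_\mcL; \{z_i\}, z^\pm)$ really lie in the asserted moduli spaces. For $\mu$: since $\sigma$ is a finite-order automorphism with exactly two fixed points on each $\sigma$-invariant component of the central chain (as noted in Definition~\ref{def:rn_curve}), the quotient $C/\sigma$ is a nodal curve of arithmetic genus zero by Riemann--Hurwitz, the images $y_i$ of the orbits $\{z_i^0, \ldots, z_i^{r-1}\}$ are pairwise distinct (as the $z_i^j$ are distinct and $\sigma$-equivariance of the marking is built into the definition), and stability of $\mu$ follows from stability of the original $(rn+2)$-marked curve. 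For $\nu$: it is defined as the image of a standard forgetful-and-weight-reduction morphism, so it lies in $\Lbar_n$ automatically.

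Next I would verify $c(\mu) = q(\nu)$. The map $c$ contracts all components of $C_\mcM$ off the central chain connecting $y^+$ to $y^-$, yielding the $\sigma$-quotient of the central chain of $C$ with the orbits $\{z_i^0, \ldots, z_i^{r-1}\}$ marked as light points. On the other hand, $q$ acts on each component of $C_\mcL$ (which is the central chain of $C$) by $[x:y] \mapsto [x^r:y^r]$, which, in the coordinates specified in Definition~\ref{def:rn_curve}, is precisely the quotient by the $\sigma$-action. Tracking the markings through each construction yields the same Losev--Manin curve, establishing the compatibility.

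With $c(\mu) = q(\nu)$ established, Lemma~\ref{lem:mod1} equips $C_\mcM \times_{C_\mcL} C_\mcL$ with the structure of a heavy $(r,n)$-curve. The quotient map $\pi_\mcM: C \to C_\mcM$ and the forgetful-and-reduction map $\pi_\mcL: C \to C_\mcL$ commute with $\tilde c$ and $\tilde q$, so they induce a canonical morphism $\Phi: C \to C_\mcM \times_{C_\mcL} C_\mcL$. I expect the main obstacle to be showing that $\Phi$ is an isomorphism of schemes (not just a bijection on closed points): both sides are degree-$r$ cyclic covers of $C_\mcM$, fully ramified at $y^\pm$ and at the nodes of its central chain, and the uniqueness of such cyclic covers (invoked in Lemma~\ref{lem:mod1}) forces $\Phi$ to be an isomorphism. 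The claimed identifications $z_i^j = (y_i, \zeta^j z_i)$, $x^\pm = (y^\pm, z^\pm)$, and $\sigma(y,z) = (y,\zeta z)$ then follow immediately by tracking the two projections: $\pi_\mcM$ is $\sigma$-invariant and collapses each orbit to a single point, while $\pi_\mcL$ intertwines $\sigma$ with multiplication by $\zeta$ on each central chain component.
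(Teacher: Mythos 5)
Your proposal is correct and follows the same strategy as the paper's proof: apply the universal property of fiber products to the canonical maps $C\to C_\mcM$ and $C\to C_\mcL$, then show the resulting map $\Phi:C\to C_\mcM\times_{C_\mcL}C_\mcL$ is an isomorphism. You supply more detail than the paper does, spelling out the membership of $\mu$ and $\nu$ in their moduli spaces and the compatibility $c(\mu)=q(\nu)$, which the paper leaves implicit in the phrase ``the canonical maps \dots\ give rise to a canonical map.'' These checks are worth making explicit and you carry them out correctly.

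The one place where your argument differs in substance from the paper's is the isomorphism step. You invoke ``uniqueness of such cyclic covers forces $\Phi$ to be an isomorphism,'' but uniqueness up to isomorphism of the cover does not by itself imply that the particular morphism $\Phi$ is an isomorphism (in general a morphism between abstractly isomorphic objects need not be invertible). The paper's argument is more direct: it reuses the explicit description of $C_\mcM\times_{C_\mcL}C_\mcL$ established in the proof of Lemma~\ref{lem:mod1} (a central chain identified with $C_\mcL$, with $r$ symmetric copies of each tree attached at the $\tilde q$-preimages) and observes that $\Phi$ matches this component-by-component with $C$. If you prefer to keep the cyclic-cover framing, you should instead note that $\Phi$ is a morphism over $C_\mcM$ between two finite covers of the same degree $r$, hence restricts to an isomorphism over the unramified locus, and then extend over the ramification using the explicit local structure at the fixed points and nodes. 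Either way the gap is easily repaired and the overall approach matches the paper's.
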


\begin{proof}
Notice that the canonical maps $C\rightarrow C_\mcM$ and $C\rightarrow C_\mcL$ give rise to a canonical map $C\rightarrow C_\mcM\times_{C_\mcL} C_\mcL$, by the universal property of fiber products. Given the explicit description of $C_\mcM\times_{C_\mcL} C_\mcL$ in the proof of Lemma~\ref{lem:mod1}, it is not hard to see that this map is an isomorphism of nodal curves, and that it identifies the marked points and automorphism as stated in the lemma.
\end{proof}

Because Lemma \ref{lem:mod1} describes a function from $\Mbar_n^r$ to the set of heavy $(r,n)$-curves, while Lemma~\ref{lem:mod2} describes its inverse, we conclude the following.

\begin{proposition}\label{prop:bijection}
    There is a bijection
    \[
    \Mbar_n^r=\{\textrm{\normalfont heavy }(r,n)\text{\normalfont -curves}\}.
    \]
\end{proposition}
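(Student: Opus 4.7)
The plan is to assemble the bijection directly from Lemmas~\ref{lem:mod1} and~\ref{lem:mod2}, which together define maps in each direction and essentially verify one of the two compositions. Let $F: \Mbar_n^r \to \{\text{heavy }(r,n)\text{-curves}\}$ denote the map produced by Lemma~\ref{lem:mod1}, sending $(\mu,\nu)$ to $C_\mcM \times_{C_\mcL} C_\mcL$ with the marked points and automorphism prescribed there, and let $G$ denote the map produced by Lemma~\ref{lem:mod2}, sending a heavy $(r,n)$-curve $(C;\{z_i^j\},x^\pm,\sigma)$ to the pair $(\mu,\nu) = ((C/\sigma; \{y_i\}, y^\pm), (C_\mcL;\{z_i\},z^\pm))$.

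First I would check that $F$ and $G$ are well-defined in the sense that their outputs actually lie in the claimed sets. For $F$, the construction in Lemma~\ref{lem:mod1} produces a nodal genus-zero curve equipped with $rn+2$ marked points and an order-$r$ automorphism satisfying all the structural requirements of Definition~\ref{def:rn_curve}; distinctness of the markings and stability of the $(rn+2)$-pointed curve follow from the stability of $\mu$ together with the $r$-fold symmetry and the fact that $\tilde q$ is unramified at the $z_i$. For $G$, I would need to verify that the pair $(\mu,\nu) \in \Mbar_{0,n+2}\times\Lbar_n$ it produces satisfies $c(\mu) = q(\nu)$, so that $(\mu,\nu)\in\Mbar_n^r$; this follows from the explicit identification $C = C_\mcM\times_{C_\mcL}C_\mcL$ in Lemma~\ref{lem:mod2}, since the existence of such a fiber product structure forces the images of $\mu$ and $\nu$ in $\Lbar_n$ to coincide.

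Next I would verify that $F\circ G = \mathrm{id}$. This is essentially the content of Lemma~\ref{lem:mod2}: starting from a heavy $(r,n)$-curve, it identifies the curve with $C_\mcM\times_{C_\mcL}C_\mcL$, and its marked points and automorphism with $(y_i,\zeta^j z_i)$, $(y^\pm,z^\pm)$, and $(y,z)\mapsto(y,\zeta z)$, which are precisely the data that $F$ produces from $G(C;\{z_i^j\},x^\pm,\sigma)$. For the other direction, $G\circ F = \mathrm{id}$, I would take $(\mu,\nu)\in\Mbar_n^r$ and check two things about $F(\mu,\nu) = C$: that the quotient $C/\sigma$ recovers $(C_\mcM;\{y_i\},y^\pm)$, and that the image of $(C;\{z_i^0\},x^\pm)$ under the composite forgetful-and-weight-reduction map $\Mbar_{0,rn+2}\rightarrow\Lbar_n$ recovers $(C_\mcL;\{z_i\},z^\pm)$. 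The first assertion is already observed in the proof of Lemma~\ref{lem:mod1}, where the projection $\tilde f_\mcM:C\rightarrow C_\mcM$ is explicitly identified with the quotient by $\sigma$. The second follows because $\tilde f_\mcL:C\rightarrow C_\mcL$ contracts precisely the trees hanging off the central chain, which is exactly what the forgetful-and-weight-reduction map does once all marked points $z_i^j$ with $j\neq 0$ are removed.

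The main subtlety, and the step I would spend the most care on, is matching the forgetful-and-weight-reduction morphism $\Mbar_{0,rn+2}\rightarrow\Lbar_n$ with the contraction $\tilde f_\mcL$ on the level of curves. Once this identification is in hand, both compositions $F\circ G$ and $G\circ F$ reduce to bookkeeping that is already carried out in the two preceding lemmas, and the bijection follows.
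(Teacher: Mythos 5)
Your proposal is correct and follows essentially the same route as the paper, which proves the proposition simply by observing that Lemma~\ref{lem:mod1} defines a map from $\Mbar_n^r$ to heavy $(r,n)$-curves and Lemma~\ref{lem:mod2} defines its inverse. Your additional checks (well-definedness of both maps, and the identification of the contraction $\tilde f_\mcL$ with the forgetful-and-weight-reduction morphism for the composition $G\circ F$) are exactly the bookkeeping the paper leaves implicit in those two lemmas, so nothing is missing.
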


Having now described how to view points of $\Mbar_n^r$ as heavy $(r,n)$-curves, we next describe how to carry out this procedure in families, thereby constructing a tautological family of heavy $(r,n)$-curves over $\Mbar_n^r$.

\subsection{Constructing a tautological family}

Recall that the universal curves over the moduli spaces $\Mbar_{0,n+2}$ and $\Lbar_n$ are given by the forgetful maps from the associated moduli spaces with one additional marked point: $\pi_\M:\Mbar_{0,n+3}\rightarrow\Mbar_{0,n+2}$ and $\pi_\mcL:\Lbar_{n+1}\rightarrow\Lbar_n$.  The map $\pi_\M$ has $n+2$ sections giving the marked points, which we denote $y_1,\dots,y_n,y^\pm$, while the map $\pi_\mcL$ has $n+2$ sections that we denote $z_1,\dots,z_n,z^\pm$. Consider the two corresponding fiber squares:
\[
\xymatrix{
   \Mbar_{n+1}^r \ar[r]^{\tilde{f}_{\mcL}}\ar[d]_{\tilde{f}_{\mcM}} & \Lbar_{n+1}\ar[d]^{\tilde{q}}\\
  \Mbar_{0,n+3} \ar[r]^-{\tilde{c}} & \Lbar_{n+1}}
  \hspace{2cm}
\xymatrix{
   \Mbar_n^r \ar[r]^{f_{\mcL}}\ar[d]_{f_\M} & \Lbar_n\ar[d]^{q}\\
  \Mbar_{0,n+2} \ar[r]^-{c} & \Lbar_n.}
\]
Since $q\circ\pi_\mcL\circ\tilde f_\mcL=c\circ\pi_\M\circ\tilde f_\M$, the universal property of fiber products yields a canonical map $\pi:\Mbar_{n+1}^r\rightarrow\Mbar_n^r$. We will argue that $\Mbar_{n+1}^r$ is a tautological family of heavy $(r,n)$-curves over $\Mbar_n^r$. In order to do so, we require an automorphism and $rn+2$ sections of this family, which we now define.

Let $\zeta:\Lbar_{n+1}\rightarrow\Lbar_{n+1}$ be the map that multiplies the final marked point on each chain by $\zeta$. We define an automorphism $\sigma:\Mbar_{n+1}^r\rightarrow\Mbar_{n+1}^r$ by applying the universal property of fiber products to the following diagram:
\begin{equation}
    \label{eq:sigmadiagram}
\xymatrix{
\Mbar_{n+1}^r \ar@{-->}[dr]^{\sigma}\ar@/^1.5pc/[drr]^{\zeta \circ \widetilde{f}_{\mcL}}\ar@/_1.5pc/[ddr]_{\widetilde{f}_{\M}} & & \\
  & \Mbar_{n+1}^r \ar[r]^{\widetilde{f}_{\mcL}}\ar[d]_{\widetilde{f}_{\M}} & \Lbar_{n+1}\ar[d]^{\widetilde{q}}\\
  & \Mbar_{0,n+3}\ar[r]^-{\widetilde{c}} & \Lbar_{n+1}.}
\end{equation}
Furthermore, we define sections $z_i^j$ of $\pi:\Mbar_{n+1}^r\rightarrow\Mbar_n^r$ by another application of the universal property of fiber products to the following diagram:
\[\xymatrix{
\Mbar_n^r\ar@{-->}[dr]^{z_i^j}\ar@/^1.5pc/[drr]^{\zeta^j\circ z_i\circ f_\mcL}\ar@/_1.5pc/[ddr]_{y_i\circ f_\M} & & \\
  & \Mbar_{n+1}^r \ar[r]^{\widetilde{f}_{\mcL}}\ar[d]_{\widetilde{f}_{\M}} & \Lbar_{n+1}\ar[d]^{\widetilde{q}}\\
  & \Mbar_{0,n+3}\ar[r]^-{\widetilde{c}} & \Lbar_{n+1},}\]
and we define the sections $x^{\pm}$ similarly.

With this notation established, we have the following result.

\begin{proposition}\label{prop:taut}
    The map $\pi:\Mbar_{n+1}^r\rightarrow\Mbar_n^r$ along with the automorphism $\sigma:\Mbar_{n+1}^r\rightarrow\Mbar_{n+1}^r$ and the $rn+2$ sections $z_i^j,x^+,x^-$ is a family of heavy $(r,n)$-curves over $\Mbar_n^r$. Moreover, this family is tautological in the sense that, for any $b\in\Mbar_n^r$, the tuple 
    \[
    \left(\pi^{-1}(b); \set{z_i^j(b)}, \; x^{\pm}(b), \; \sigma\vert_{\pi^{-1}(b)}\right)
    \]
    is isomorphic to the heavy $(r,n)$-curve parametrized by $b$ (according to Proposition~\ref{prop:bijection}).
\end{proposition}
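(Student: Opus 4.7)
The strategy is to verify the conditions of Definition~\ref{def:family_rn_curve} directly: that $\pi$ is flat, that $\sigma$ and the sections are compatible with $\pi$ as prescribed, and that every geometric fiber of $\pi$ is a heavy $(r,n)$-curve. Once these are established, the tautological statement follows immediately from the fiber description together with Proposition~\ref{prop:bijection}.

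The first and most substantial step is to identify the fibers of $\pi$ explicitly. For $b=(\mu,\nu)\in\Mbar_n^r$, unraveling the iterated fiber product definitions of $\Mbar_{n+1}^r$ and $\Mbar_n^r$ and applying the associativity of fiber products yields
\[
\pi^{-1}(b)\;=\;\pi_\M^{-1}(\mu)\times_{\Lbar_{n+1}}\pi_\mcL^{-1}(\nu)\;=\;C_\mcM\times_{\Lbar_{n+1}} C_\mcL.
\]
The key observation is that, because $\pi_\mcL\circ\tilde c=c\circ\pi_\M$ and $\pi_\mcL\circ\tilde q=q\circ\pi_\mcL$, the restrictions of $\tilde c$ and $\tilde q$ to $C_\mcM$ and $C_\mcL$ both factor through the single universal-curve fiber over $c(\mu)=q(\nu)\in\Lbar_n$. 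Thus the fiber product above is exactly the curve $C_\mcM\times_{C_\mcL}C_\mcL$ appearing in Lemma~\ref{lem:mod1}, so by Lemma~\ref{lem:mod1} and Proposition~\ref{prop:bijection} it is the heavy $(r,n)$-curve parametrized by $b$.

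Next, the compatibilities $\pi\circ\sigma=\pi$, $\pi\circ z_i^j=\mathrm{id}$, and $\pi\circ x^{\pm}=\mathrm{id}$ follow directly from the defining universal properties of these morphisms. Restricting to $\pi^{-1}(b)$ under the above identification, the universal-property constructions force $z_i^j(b)=(y_i,\zeta^jz_i)$, $x^\pm(b)=(y^\pm,z^\pm)$, and $\sigma\vert_{\pi^{-1}(b)}(y,z)=(y,\zeta z)$, matching the data in Lemma~\ref{lem:mod1} exactly. For the flatness of $\pi$, I would invoke miracle flatness: the argument of Proposition~\ref{prop:WCfiberproduct} applies verbatim for $n+1$ as well as for $n$, so both $\Mbar_n^r$ and $\Mbar_{n+1}^r$ are smooth wonderful compactifications, and the fiber computation shows that every fiber of $\pi$ has dimension $1=\dim\Mbar_{n+1}^r-\dim\Mbar_n^r$, whence $\pi$ is flat.

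The main obstacle is the fiber identification in the first step, which requires carefully tracking several nested fiber products and verifying that the restrictions of $\tilde c$ and $\tilde q$ to the respective universal-curve fibers both factor through a common fiber of $\pi_\mcL$ in $\Lbar_{n+1}$. Once this is done, matching the marked points and the automorphism on each fiber is a formal consequence of the universal-property constructions of $z_i^j$, $x^\pm$, and $\sigma$, and flatness is immediate from miracle flatness. The tautological description of the family is then built into the fiber calculation itself, so no additional work is needed at the end.
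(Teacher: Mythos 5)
Your proposal is correct, and most of it runs parallel to the paper: the identification of the scheme-theoretic fiber $\pi^{-1}(b)$ with $C_\mcM\times_{C_\mcL}C_\mcL$ (which you obtain by commuting fiber products, using $\pi_\mcL\circ\tilde c=c\circ\pi_\M$ and $\pi_\mcL\circ\tilde q=q\circ\pi_\mcL$) is just a more explicit version of the paper's remark that the construction ``specializes to the construction of Lemma~\ref{lem:mod1}'' over each $b$, and the compatibilities $\pi\circ\sigma=\pi$, $z_i^j(b)=(y_i,\zeta^j z_i)$, $x^\pm(b)=(y^\pm,z^\pm)$ are handled in both arguments by uniqueness in the universal property of the fiber product. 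The genuine divergence is the flatness of $\pi$. The paper covers $\Mbar^r_{n+1}$ by two open sets: away from $\tilde f_\mcL^{-1}(D)$ (with $D$ the fixed divisor of $\zeta$) the map $\tilde f_\M$ is \'etale by base change from $\tilde q$, so $\pi=\pi_\M\circ\tilde f_\M$ is flat there; away from $\tilde f_\M^{-1}(D')$ (with $D'$ the divisor of contracted trees) the map $\tilde f_\mcL$ is an isomorphism, so $\pi=\pi_\mcL\circ\tilde f_\mcL$ is flat there; and these loci cover since $\tilde f_\mcL^{-1}(D)\cap\tilde f_\M^{-1}(D')=\emptyset$. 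You instead invoke miracle flatness: $\Mbar^r_{n+1}$ and $\Mbar^r_n$ are both wonderful compactifications by Proposition~\ref{prop:WCfiberproduct} (applied at levels $n+1$ and $n$), hence smooth and irreducible of dimensions $n$ and $n-1$, and your fiber computation shows every fiber is a curve, so the relative dimension is constantly one. This is a valid alternative; there is no circularity, since the fiber identification is purely formal and does not use flatness. The trade-off is that the paper's route is self-contained, using only the flatness/\'etaleness and base-change facts already in play, whereas yours is shorter but leans on the smoothness (hence Cohen--Macaulayness) of wonderful compactifications, a standard fact from De Concini--Procesi that the paper never explicitly states, and it front-loads the fiber analysis. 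One small point to keep in mind in either approach: for the fibers to satisfy Definition~\ref{def:family_rn_curve} one needs the scheme-theoretic fiber to be the reduced nodal curve, which both you and the paper outsource to Lemma~\ref{lem:mod1}.
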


\begin{proof}
We first confirm that $\pi$ is flat, which can be proven by covering $\Mbar_{n+1}^r$ by open sets on which $\pi$ is flat. Consider the following fiber square over $\Mbar_n^r$:
\[
\xymatrix{
   \Mbar_{n+1}^r \ar[r]^{\tilde{f}_{\mcL}}\ar[d]_{\tilde{f}_{\mcM}} & f_\mcL^*\Lbar_{n+1}\ar[d]^{\tilde{q}}\\
  f_\mcM^*\Mbar_{0,n+3} \ar[r]^-{\tilde{c}} & g^*\Lbar_{n+1},}
\]
where $g=c\circ f_\mcM=q\circ f_\mcL$. Since flatness is preserved by pullback, the pullbacks in this diagram are all flat over $\Mbar_n^r$. Notice that $\tilde{q}$ is \'etale away from the divisor $D \subseteq f_\mcL^*\Lbar_{n+1}$ of fixed points of $\zeta$. Since \'etaleness is preserved under base change, it follows that $\tilde{f}_{\mcM}$ is \'etale on $\Mbar_{n+1}^r\setminus\tilde f_\mcL^{-1}(D)$.  In particular, the restriction of $\pi$ to $\Mbar_{n+1}^r\setminus\tilde f_\mcL^{-1}(D)$ is the composition of an \'etale morphism and a flat morphism, so it is flat. On the other hand, the map $\tilde{c}$ is an isomorphism away from the divisor $D' \subseteq f_\mcM^*\Mbar_{0,n+3}$ of contracted trees, so it follows that $\tilde{f}_{\mcL}$ is an isomorphism on $\Mbar_{n+1}^r \setminus \tilde{f}_{\mcM}^{-1}(D')$.  Thus, the restriction of $\pi$ to this open locus is the composition of an isomorphism with a flat morphism, so it is flat.  Because $\widetilde{f}_{\mcL}^{-1}(D) \cap \widetilde{f}_{\mcM}^{-1}(D') = \emptyset$, we have covered $\Mbar_{n+1}^r$ by open sets on which $\pi$ is flat.

We next confirm that $\pi \circ \sigma = \pi$.  To do so, first note that
\[f_{\mcL} \circ \pi \circ \sigma = \pi_{\mcL} \circ \zeta \circ \widetilde{f}_{\mcL} = \pi_{\mcL} \circ \widetilde{f}_{\mcL},\]
where the first equality follows from \eqref{eq:sigmadiagram} and the second from the definition of $\zeta$.  This implies that the diagram
\[\xymatrix{
\Mbar_{n+1}^r \ar@{-->}[dr]\ar@/^1.5pc/[drr]^{\pi_{\mcL} \circ \widetilde{f}_{\mcL}}\ar@/_1.5pc/[ddr]_{\pi_{\mcM} \circ \widetilde{f}_{\mcM}} & & \\
  & \Mbar_{n}^r \ar[r]^{f_{\mcL}}\ar[d]_{f_{\M}} & \Lbar_{n}\ar[d]^{q}\\
  & \Mbar_{0,n+2}\ar[r]^-{c} & \Lbar_{n}.}\]
commutes with the dashed arrow equal either to $\pi \circ \sigma$ or to $\pi$, so the universal property of fiber products ensures that $\pi \circ \sigma = \pi$.

Finally, note that
\[
    \left(\pi^{-1}(b); \set{z_i^j(b)}, \; x^{\pm}(b), \; \sigma\vert_{\pi^{-1}(b)}\right)
\]
is isomorphic to the heavy $(r,n)$-curve parametrized by $b$; this follows from the observation that, upon restricting to the fiber over $b\in\Mbar_n^r$, the construction of $\Mbar_{n+1}^r$, $\sigma$, and $z_i^j,x^+,x^-$ over $\Mbar_n^r$ specializes to the construction of Lemma~\ref{lem:mod1} over $b$.  Thus, the given data indeed defines a tautological family of heavy $(r,n)$-curves.
\end{proof}

\subsection{The tautological family is universal}

We are now prepared to conclude with the following culminating result.

\begin{theorem}
\label{thm:modulispace}
The fiber product $\Mbar^r_n$ is a fine moduli space for the moduli problem of heavy $(r,n)$-curves.  That is, for any base scheme $B$, pulling back the tautological family gives a bijection
\[
\Bigg\{ \text{\normalfont morphisms }B \rightarrow \Mbar^r_n\Bigg\} \;\; \longrightarrow \;\; \left\{\substack{\textstyle \text{\normalfont families of heavy}\\ \textstyle (r,n)\text{\normalfont -curves over }B}\right\}.
\]
\end{theorem}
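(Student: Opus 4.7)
The plan is to construct an inverse to the pullback map: given a family $(\mcC/B; \sigma, z_i^j, x^\pm)$ of heavy $(r,n)$-curves, I will produce a morphism $g: B \to \Mbar^r_n$ whose pullback of the tautological family of Proposition~\ref{prop:taut} recovers $\mcC/B$. Since $\Mbar^r_n = \Mbar_{0,n+2} \times_{\Lbar_n} \Lbar_n$, the universal property of the fiber product reduces the construction of $g$ to that of two morphisms $g_\mcM: B \to \Mbar_{0,n+2}$ and $g_\mcL: B \to \Lbar_n$ satisfying $c \circ g_\mcM = q \circ g_\mcL$.

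The morphism $g_\mcM$ is obtained via the fiberwise $\sigma$-quotient: the quotient $\mcC/\sigma \to B$ is a flat family of nodal genus-zero curves equipped with $n+2$ canonical sections (the images of the orbits $\{z_i^j\}_j$ and of $x^\pm$), and Lemma~\ref{lem:mod2} verifies fiberwise stability, yielding $g_\mcM$ by the universal property of $\Mbar_{0,n+2}$. The morphism $g_\mcL$ is obtained by viewing $(\mcC; \{z_i^j\}, x^\pm)$ (ignoring $\sigma$) as a family of $(rn+2)$-pointed stable genus-zero curves via Definition~\ref{def:rn_curve}(iv), giving a morphism $B \to \Mbar_{0,rn+2}$, and then composing with the natural forgetful/weight-reduction morphism $\Mbar_{0,rn+2} \to \Lbar_n$ that retains only the markings $\{z_i^0\}_i$ and $x^\pm$.

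The compatibility $c \circ g_\mcM = q \circ g_\mcL$ is the central technical point. I would verify it by exhibiting an isomorphism of the two classified families of $\w_{\text{LM}}$-stable chains over $B$ (using that $\Lbar_n$ is a fine moduli space). Writing $\mcC^{\text{ch}} \subseteq \mcC$ for the family of central chains connecting $x^+$ and $x^-$, the family classified by $q \circ g_\mcL$ is $\mcC^{\text{ch}}$ with markings $((z_i^0)^r, x^\pm)$ (by the explicit formula for $q$ from Section~\ref{subsec:Hassett}), while the family classified by $c \circ g_\mcM$ is the central chain $\mcC^{\text{ch}}/\sigma$ of $\mcC/\sigma$ with markings $(\tilde c(y_i), y^\pm)$. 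Since $\sigma$ acts by $\zeta$-multiplication on each component of $\mcC^{\text{ch}}$ (by Definition~\ref{def:rn_curve}), the quotient $\mcC^{\text{ch}}/\sigma$ is canonically identified with $\mcC^{\text{ch}}$ via the $r$-th power map on each component, and under this identification the attachment point of the contracted tree containing $y_i$ is precisely $(z_i^0)^r$; the two families therefore agree.

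With $g: B \to \Mbar^r_n$ in hand, compatibility of pullback with fiber products combined with the curve-level identification in Lemma~\ref{lem:mod1} give $g^* \Mbar^r_{n+1} \cong \mcC$ as families of heavy $(r,n)$-curves. Uniqueness of $g$ follows because $g_\mcM$ and $g_\mcL$ are each determined by $\mcC$ (via the fineness of $\Mbar_{0,n+2}$ and $\Lbar_n$), and these in turn determine $g$ via the universal property of the fiber product. The main obstacle is the compatibility step: although Lemma~\ref{lem:mod1} makes it plausible at the level of points, upgrading to an equality of morphisms requires the careful analysis above of $q$'s action on the universal chain together with the identification of $\sigma$-quotients with $r$-th powers of markings in families.
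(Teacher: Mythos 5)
Your proposal follows essentially the same route as the paper's proof: build the classifying morphism from the $\sigma$-quotient (giving $B \rightarrow \Mbar_{0,n+2}$) and the contraction to the central chain (giving $B \rightarrow \Lbar_n$), combine them via the universal property of the fiber product, and identify the pulled-back tautological family with the given family using the pointwise descriptions of Lemmas~\ref{lem:mod1} and~\ref{lem:mod2}, with uniqueness coming from the determination of the two component morphisms. The only notable difference is one of emphasis: you make explicit the compatibility $c\circ g_\mcM = q\circ g_\mcL$ (which the paper leaves implicit), whereas the paper is more explicit than you on the final step of upgrading the fiberwise identification to an isomorphism of families, which it does by producing the canonical map $\mcC \rightarrow \mcC_\mcM\times_{\mcC_\mcL}\mcC_\mcL$ and invoking flatness of $\mcC$ over $B$ together with a fiberwise-isomorphism criterion.
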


\begin{proof}
To prove injectivity, suppose that two morphisms $f,g:B\rightarrow\Mbar_n^r$ give rise to isomorphic families of heavy $(r,n)$-curves upon pulling back the tautological family. Then these isomorphic families restrict to isomorphic heavy $(r,n)$-curves over every $b\in B$. Proposition~\ref{prop:bijection} then implies that the $f(b)=g(b)$ for every $b\in B$, so $f=g$.

To prove surjectivity, let $\mcF=(\mcC;\{z_i^j\},x^{\pm},\sigma)$ be a family of heavy $(r,n)$-curves over $B$. Upon taking the quotient of $\mcC$ by $\sigma$, we obtain a family $\mcF_\mcM=(\mcC_\mcM;y_i,y^\pm)$ of curves in $\Mbar_{0,n+2}$, and thus a morphism $f_\mcM:B\rightarrow \Mbar_{0,n+2}$. Similarly, after contracting to the central chain of $\mcC$, we obtain a family $\mcF_\mcL=(\mcC_\mcL;z_i,z^\pm)$ of curves in $\Lbar_{n}$, and thus a morphism $f_\mcL:B\rightarrow\Lbar_n$. By the universal property of fiber products, $f_\mcM$ and $f_\mcL$ give rise to a morphism $f_\mcF:B\rightarrow\Mbar_n^r$. We claim that $\mcF\cong f_\mcF^*(\mcF^\mathrm{taut})$ where $\mcF^\mathrm{taut}$ denotes tautological family over $\Mbar_n^r$.

By construction of $\mcF^\mathrm{taut}$ as a fiber product, we can construct its pullback $f_\mcF^*(\mcF^\mathrm{taut})$ from the families $\mcF_\mcM$ and $\mcF_\mcL$:
\[
f_\mcF^*(\mcF^\mathrm{taut})=
(\mcC_\mcM\times_{\mcC_\mcL}\mcC_\mcL;\;\{(y_i,\zeta^jz_i)_i^j\},\;(y^\pm,z^\pm),\;\sigma).
\]
The natural maps $\mcC\rightarrow\mcC_\mcM$ and $\mcC\rightarrow\mcC_\mcL$ give rise to a map $\mcC\rightarrow \mcC_\mcM\times_{\mcC_\mcL}\mcC_\mcL$, and Lemma~\ref{lem:mod2} implies that this map is a fiberwise isomorphism of heavy $(r,n)$-curves over $B$. Since $\mcC$ is flat over $B$, the isomorphism on fibers implies that $\mcC\rightarrow \mcC_\mcM\times_{\mcC_\mcL}\mcC_\mcL$ is an isomorphism (see, for example, \cite[Section 37.16]{stacks-project}), and we then conclude that $\mcF\cong f_\mcF^*(\mcF^\mathrm{taut})$.
\end{proof}

Because Theorem~\ref{thm:modulispace} shows that $\Mbar^r_n$ is a fine moduli space for heavy $(r,n)$-curves and Proposition~\ref{prop:WCfiberproduct} shows that $\Mbar^r_n$ is the wonderful compactification of the $r$-braid arrangement with respect to its minimal building set, the proof of Theorem~\ref{thm:main} is complete.

\bibliographystyle{amsalpha}
\bibliography{references}
  
\end{document}